\documentclass[12pt,twoside,reqno]{amsart}
\usepackage{amsmath, amsfonts, amsthm, amssymb,amscd, graphicx, amscd}
\usepackage{float,epsf}
\usepackage[english]{babel}
\usepackage{enumerate}
\usepackage{tikz}
\usepackage{enumerate}
\usepackage{etaremune}
\usepackage{srcltx}
\usepackage{geometry}
\usepackage{verbatim}
\usepackage{mathrsfs}
\usepackage{hyperref}
\usepackage{enumitem}
\setlist[enumerate,1]{label=\arabic*.}
\usepackage{esint}
\usepackage{pgfplots}

\newcommand{\R}{\mathbb{R}}

\newcommand{\tn}{\textnormal}

\newcommand{\eps}{\varepsilon}

 \newcommand{\supp}{\text{\rm supp}\,}

\newcommand{\grad}{\nabla}
 \renewcommand{\supp}{\text{\rm supp}\,}

\newtheorem{theorem}{Theorem}[section]
\newtheorem{lemma}[theorem]{Lemma}

\newtheorem{definition}[theorem]{Definition}

\newtheorem{proposition}[theorem]{Proposition}
\newtheorem{corollary}[theorem]{Corollary}
\newtheorem{remark}[theorem]{Remark}
\newtheorem{example}[theorem]{Example}

\numberwithin{equation}{section}
\numberwithin{figure}{section}

\newcommand{\norm}[1]{\left\|#1\right\|}

\newcommand{\FF}{{\boldsymbol F}}

\renewcommand{\div}{\mathrm{div}} 

\renewcommand{\div}{\text{\sl div}\,} 

\def\rightangle{\vcenter{\hsize5.5pt
    \hbox to5.5pt{\vrule height7pt\hfill}
    \hrule}}
\def\rtangle{\mathrel{\rightangle}}
\def\intave#1{\int_{#1}\hbox{\llap{$\raise2.3pt\hbox{\vrule
height.9pt width7pt}\phantom{\scriptstyle{#1}}\mkern-2mu$}}}

\begin{document}
\title[The free Boundary in a Long-Range Segregation Model]{The free Boundary in a higher-dimensional Long-Range Segregation Model}

\author{Howen Chuah}
\address{Howen Chuah,
Department of Mathematics, Purdue University,
 West Lafayette, IN 47907-2067, USA}
\email{hchuah@purdue.edu}


\author{Monica Torres}
\address{Monica Torres,
Department of Mathematics, Purdue University,
 West Lafayette, IN 47907-2067, USA}
 \email{torres@math.purdue.edu}

\keywords{segregation models, free boundary problems}
\subjclass[2020]{Primary: 35J47, 35R35;
Secondary: 35Q92
}

\begin{abstract}
We consider a system of elliptic equations, depending on a small parameter  $\eps$,  that models long-range segregation of populations. The diffusion is governed by the Laplacian. This system was previously investigated by Caffarelli, Patrizi, and Quitalo in \cite{CL2} as a model in population dynamics, and they established the regularity of the free boundary in two dimensions. In this paper we study the free boundary in the higher dimensional case. We extend the concept of angles and asymptotic cones to higher dimensions, and give a characterization of regular and singular points in terms of their densities and angles. We obtain a structure result of the free boundary and show that, if the angles at the singular points are away from $\frac{n\omega_n}{2}$, the regular set is open in the free boundary and locally a $C^1$ manifold of dimension $n-1$. We also show that, if the supports of the populations are convex, they are convex polytopes. {A weak form of the equality of angles for the convex configuration is also derived.}
\end{abstract}
\maketitle

\vspace{-6mm}
\begingroup\hypersetup{hidelinks} 
\endgroup
\newcommand{\sizeofboundary}{R}

\section{Introduction}

In this paper, we consider the following system of elliptic equations 

\begin{equation}
\label{main_problems}
\begin{cases} \Delta u^{\varepsilon}_{i}= \frac{1}{\eps^2} u^{\eps}_i  \sum_{j \neq i}  H_R(u^{\eps}_j)(x)\quad & \text{ in } \Omega,\\
        u^{\eps}_i= f_i & \text{ on } (\partial \Omega)_{\leq {\sizeofboundary}}, \\
        
        u^{\eps}_i \geq 0 & \text{ in } \Omega \cup (\partial \Omega)_{\leq \sizeofboundary},
       \end{cases}
\end{equation}
for  $i=1,\ldots, K$, 
where $\Omega$ is a bounded Lipschitz domain in  $\R^n$, $\eps >0$, $0 < \sizeofboundary \leq 1$. The boundary neighborhood is defined as 
\begin{equation*}
    (\partial \Omega)_{\leq \sizeofboundary}:= \{x \in \Omega^c: d(x, \partial \Omega)\leq \sizeofboundary\} ,
\end{equation*}
where $d(x,\partial \Omega) := \inf_{y \in \partial \Omega}|x-y|$ denotes the distance of  $x$ to $\partial \Omega$. 
The boundary data are nonnegative H{\"o}lder continuous functions
with supports separated by at least distance $\sizeofboundary$ (see assumptions \eqref{fiassumption}-\eqref{f_i-f_j_disjointsupport}).

Each equation in the system is coupled to the others through a nonlocal zero-order interaction  term
$H_R(u_j^\eps)$, which depends  on the parameter $R$. 
We consider two types of nonlocal interactions: 


\begin{equation}
\label{H1}
H_R(w)(x)= \fint_{{B}_\sizeofboundary(x)}w(y)\,dy, 
\end{equation}
and 
\begin{equation}
\label{H2}
 H_R(w)(x)= \sup_{{B}_R(x)} w, 
\end{equation}
where $w \geq 0$. We assume that the boundary data satisfy  
\begin{equation}    
\label{fiassumption}
        f_i : (\partial \Omega)_{\leq R} \rightarrow \mathbb{R}, f_i \geq 0, f_i \neq 0, \quad \textnormal{$f_i$ is H\"older continuous,} 
    \end{equation}
and that there is a constant $c > 0$ such that, for any $x \in \partial \Omega \cap \text{supp } f_i$,
\begin{equation}
\label{size_of_ball_intersect_support}
|{B}_r(x) \cap \text {supp }f_i| \geq c|{B}_r(x)|,
\end{equation}
and
       \begin{equation}\label{f_i-f_j_disjointsupport}       
        d(\tn{supp} \, f_i, \tn{supp} \, f_j) \geq \sizeofboundary \textnormal{ for all } i \neq j, \\
        \end{equation}
It is also assumed that each $\supp f_i \cap \partial\Omega$ has finitely many connected components.

The existence of positive solutions $(u_1^\eps,\ldots,u_K^\eps)$ of the system \eqref{main_problems} was proved in \cite{CL2}. They also showed that solutions converge to a limit configuration $(u_1,\ldots, u_K)$ as $\eps \to 0$, where the supports of the populations $u_i$  are mutually disjoint and separated from each other by distance $\sizeofboundary$ (see Figure \ref{generalpicture}). The regularity of the free boundary for $n=2$ was also established. Uniqueness of solutions to system \eqref{main_problems} was proved in \cite{Bozorgnia} with  $H_R$ defined as in \eqref{H1}. For system \eqref{main_problems}, one of the main challenges is that classical techniques of free boundaries (i.e., monotonicity formulas) can not be used. For $n=2$, the techniques developed in \cite{CL2} rely on the concepts of {\it asymptotic cone} and {\it asymptotic angle}. At a singular point, the angle is measured as the intersection of asymptotic cones. These concepts do not carry over to higher dimensions in a straightforward manner. In this paper, we replace the intersection of cones with the intersection of half spheres and the {\it angle} with the Hausdorff measure of such intersection. This generalization reduces to the classical asymptotic cone, introduced in \cite{CL2}, for $n=2$. We also establish a characterization of regular and singular points, in any dimension, in terms of angles and densities (see Theorem \ref{Regular_points_Singular_points_in_Higher_Dimensions}).

For the case of $n=2$, it was shown that the singular points are isolated and the regular set is locally $C^1$. It was also shown, under additional conditions, that the free boundary is Lipschitz. A result on the equality of angles is also established (i.e., if two free boundary points in different populations are at distance $\sizeofboundary$, their corresponding angles are equal). The proof is based on the  geometry of $\R^2$ and it does not have easy counterparts in higher dimensions. Another tool for $n=2$ is the construction of harmonic functions (i.e., barrier functions) on cones vanishing on the boundary of these cones, but these constructions depend heavily on the geometry of $\R^2$. In this paper, we study the regularity of the free boundary in higher dimensions. We show that, if the angles at the singular points are bounded away from $\frac{n\omega_n}{2}$, the regular set is open and locally $C^1$ (see Theorem \ref{Combined_Results_on_Regular_Set}). We also show that, under a convexity condition, the free boundary consist of finitely many hyperplanes of dimension $n-1$, and the angles of the singular points are bounded above by $\frac{n\omega_n}{3}$. We also prove that, if there are two free boundary points that are at distance $\sizeofboundary$ apart form each other, either they are both regular or they are both singular. A symmetry condition on the free boundary is also derived (see Theorem \ref{Convex}).



System \eqref{main_problems} is an example of \eqref{modelproblem}, the Gause-Lotka-Volterra system in population dynamics, that models coexistence of species that live in the same territory, diffuse, and compete for limited resources.
 
\begin{equation}\label{modelproblem}L_i(u^\eps _i)=\frac{u^\eps_i}{\eps^2}F(u^\eps_1,\ldots,u^\eps_K),\end{equation}
in some domain $\Omega$, where $u^\eps_i$ is a positive function representing the density of the $i$-th species, $L_i$ encodes  the diffusion of  $u^\eps_i$, and   $u_i^\eps F(u^\eps_1,\ldots,u^\eps_K)/\eps^2$ models  the  attrition of the species $i$ due to competition with the others. The interaction functional  $F$ is strictly positive whenever the supports of two or more species overlap.
The smaller the parameter $\eps$, the stronger the competition among species. 
In the limit as $\eps\to0^+$ the high  competition  forces the species  to   segregate, meaning  $u_iu_j=0$ for $j\neq i$. 

Another example of system \eqref{modelproblem} is given by
\begin{equation}
\label{adjacentsegregation}\Delta u^\eps _i=\frac{1}{\eps^2}\sum_{j\neq i}u^\eps_i u_j^\eps. \end{equation}

The existence of positive solutions to \eqref{adjacentsegregation} was initially investigated by Dancer and Du \cite{DancerDu1, DancerDu2} in the case of three species. 
Convergence to a segregated limit configuration as  $\eps\to0^+$ was later proven by Dancer, Hilhorst, Mimura, and Peletier \cite{DancerHilMimPel}. 
 More general classes of linear competitive systems, including \eqref{adjacentsegregation} as a special case, have been studied by Conti, Terracini, and Verzini \cite{Conti4, Conti3, Conti5}. %
 We also refer to \cite{CL7, Conti6} for related optimal partition problems involving the first eigenvalue of the Laplace operator.  
 The geometric properties of the free boundaries $\partial\{u_i>0\}\cap\Omega$ for the system \eqref{adjacentsegregation} have been investigated by Caffarelli, Karakhanyan and Lin \cite{CL5} (see also \cite{CL4}). It was shown that the free boundary splits into two parts: a regular set,  which is a locally  analytic surface, and a singular set, which is a closed set of Hausdorff dimension at most $n-2$. Singular points occur where the boundaries of three or more connected components of the supports intersect. See \cite{TavarezTerracini} for similar results applied to a broader class of systems.
{

The system \eqref{adjacentsegregation}, when the Laplace operator is replaced by the fully nonlinear negative Pucci operator, has been studied by Quitalo \cite{V}.
In \cite{V}, existence of positive solutions and convergence to  a limiting segregated configuration is established. In the case of two populations, Caffarelli, Quitalo, Patrizi, and Torres {\cite{CL3}} showed that the  limiting problem becomes a two-phase free boundary problem with the associated free boundary  condition 
$\frac {\partial u_1}{\partial \nu_1} = \frac {\partial u_2}{\partial \nu_2}$,  where $\nu_1$ and $\nu_2$ denote the interior normal directions to the respective supports. This formulation allowed for the application of  the sup-convolutions method (see \cite{Geometric_Free_Boundary} and the references therein), 
to prove that the set of regular points form an open subset of the free boundary and it is locally of class $C^{1,\alpha}$. 

The interaction between the populations in \eqref{adjacentsegregation} is local, meaning that it depends only on the value of the densities, $u_i(x)$, at the point $x$. The segregation is adjacent since the supports of the populations have a common boundary. However, there are many processes where the growth of species $i$ is inhibited by populations $j$ occupying an entire neighborhood around $x$, see for example \cite{CuMaMa, MiEiFang}. Caffarelli, Patrizi, and Quitalo \cite{CL2} introduced system \eqref{main_problems} with the Laplace operator as an example to model non-local interactions. The same system was studied by Chuah, Patrizi, and Torres in \cite{ChPaTo26} with Laplacian replaced by the negative Pucci operator. 
When $H$ is given by \eqref{H1} with $w^2(y)$ in place of $w(y)$, minimizing solutions and the limiting configurations of \eqref{main_problems} have been studied in \cite{NS1,NS2}.    

 The paper is organized as follows. In section 2 we introduce some preliminaries. In section 3, we expand the analysis of regular and singular points in the two-dimensional case from the results in \cite{CL2}. We show that the density at all free boundary points exists and is proportional to the angle, and classify the regular and singular points in terms of densities. In section 4 we generalize the concepts of angles and asymptotic cones to higher dimensions and give a characterization of regular and singular free boundary points in terms of densities and angles (Theorem \ref{Regular_points_Singular_points_in_Higher_Dimensions}). In section 5 we study the structure of the free boundary in higher dimensions (Theorem \ref{Combined_Results_on_Regular_Set}). The analysis in the case the supports of the 
 populations are convex (Theorem \ref{Convex}) is also studied in Section 5.
{



\usetikzlibrary{calc}
\begin{figure}
\begin{tikzpicture}[scale=0.7]
    

    \filldraw[fill=blue!10, draw=blue!80, thick] 
        plot[domain=-4.6:4.6, samples=100] ({0.4*sin(1.5*\x r) - 0.5}, \x) 
        -- (-3.5, 3.1) arc (140:220:4.8) -- cycle;
    \node[blue!80, font=\large] at (-2.2, 0) {$S_1$};

    \filldraw[fill=red!10, draw=red!80, thick] 
        plot[domain=-4.6:4.6, samples=100] ({0.4*sin(1.5*\x r) + 0.5}, \x) 
        -- (3.5, 3.1) arc (40:-40:4.8) -- cycle;
    \node[red!80, font=\large] at (2.2, 0) {$S_2$};

    
\end{tikzpicture}
\caption{Possible configuration with two populations. The supports of the populations are the sets $S_i = \{u_i > 0\} \cap \Omega$. The distance between the the supports is $\sizeofboundary$.}
\label{generalpicture}
\end{figure}

\section{Preliminaries}
In this section we introduce the notation used in this paper and we recall 
some known results. A function $f \in L^1(\mathbb{R}^n)$ is of bounded variation if $|\grad f|$ is a finite Radon measure in $\R^n$.
If the characteristic function of a set $E \subset \R^n$, denoted as $\chi_E$, is of bounded variation, then $E$ is called a set of finite perimeter. If $E \subset \mathbb{R}^n$ is a set of finite perimeter we define 
\begin{equation}
E^{(t)} := \left\{x \in \mathbb{R}^n: \lim_{r \rightarrow 0^+} \frac{|B_r(x) \cap E| }{|B_r(x)|} \textnormal{exists and is equal to } t\right\}
\end{equation}
The essential boundary of $E$ (or measure theoretic boundary) is 
\begin{equation}
\partial^e E := \mathbb{R}^n-(E^{(0)} \cup E^{(1)}),
\end{equation}
and the reduced boundary of $E$ is defined as
\begin{equation}
 \partial ^* E := \left\{x: \nu_E(x) = \lim_{r \rightarrow 0^+}\frac{\grad \chi_E (B_r(x))}{|\grad \chi_E| (B_r(x))} \textnormal{ exists and has length }1\right\}.
\end{equation} 
  
The total variation of the vector-valued Radon measure $\nabla \chi_E$ is the Radon measure $|\nabla \chi_E| = \mathcal{H}^{n-1} \rtangle \partial^*E$.
Also, for any $x, y \in \mathbb{R}^n,$ we denote by $\overline{xy} := \{tx+(1-t)y: t \in [0,1]\}$ the line segment in $\mathbb{R}^n$ with endpoints $x$ and $y.$ For any $x, y, z \in \mathbb{R}^n$ with $x \neq y$ and $z \neq y,$ let $\textnormal{angle}(x;y;z)$ denote the angle between the vectors $x-y$ and $z-y$. The open ball in $\R^n$ centered at $x_0$ and radius $r$ is denoted as $B_r(x_0)$,
while $B^{'}_r(x_0) := \{x \in \mathbb{R}^{n-1} : |x-x_0| < r\}$ is the open ball in $\mathbb{R}^{n-1}$ with radius $r$ centered at $x_0$. 

In this paper, for a convex set $C \subset \mathbb{R}^n$, a boundary point $x_0 \in \partial C$ with a {\it unique} supporting hyperplane will be called a differentiable boundary point. Also, recall that a set is said to be $F_{\sigma}$ if it is a countable union of closed sets, and a set is said to be $G_{\delta}$ if it is a countable intersection of open sets.

We recall the definition of regular and singular points introduced in \cite{CL2}}. In the paper, for any $i \in \{1,...,K\},$ we define by $C_i := \cup_{j \neq i} (\{u_j >0\}\cap \Omega).$

\begin{definition}
A free boundary point $x_0 \in \partial S_i\cap \Omega$ is said to be a {\bf regular point} if there exists a unique $x_1 \in \partial C_i \cap \Omega$ for which $d(x_0,x_1) = \sizeofboundary.$ A free boundary point is said to be {\bf singular} if it is not regular. Hence, $x_0 \in \partial S_i \cap \Omega$ is singular if and only if there are two distinct points $x_1, x_2 \in \partial C_i \cap \Omega, x_1 \neq x_2$ such that $d(x_0,x_1) = d(x_0,x_2) = \sizeofboundary.$
\end{definition}

\begin{definition}
For each population $u_i$ we define
\begin{equation}
S_i:= \{x \in \Omega : u_i (x) > 0 \}
\end{equation}
and therefore $S_i$ is the interior of the support of population $u_i$ in $\Omega$. It was proved in (\cite[Theorem 6.5]{CL2}) that $S_i$ is a set of finite perimeter, see also (\cite[Theorem 4]{ChPaTo26}). 

$\Gamma^{\textnormal{reg}}_i$ is the set of all regular free boundary points in $\partial S_i \cap \Omega$, and $\Gamma^{\textnormal{sin}}_i$ is the set of all singular free boundary points in $\partial S_i \cap \Omega$.

Moreover, in this paper we consider two types of singular points, namely singular points of type 1 and 2 (see Definition \ref{twotypes}). We define $\Gamma^{\textnormal{sin}}_{i,1}$ to be the set of all type 1 singular points in $\partial S_i \cap \Omega$. Also, we define $\Gamma^{\textnormal{sin}}_{i,2}$ to be the set of all type 2 singular points in $\partial S_i \cap \Omega$.

 A singular free boundary point $x_0 \in \partial S_i \cap \Omega$ is said to be a "cusp" if it is a point of density zero for the set $\{ u_i > 0 \}$. We define $\Gamma^{\textnormal{cusp}}_i$ to be the set of all cusps in $\partial S_i \cap \Omega$.
\end{definition}
We recall the following two classical theorems in geometric measure theory:
\begin{theorem}
 \label{Federer_Theorem}
Let $E \subset \mathbb{R}^n$ be a set of locally finite perimeter, then we have
\begin{eqnarray}
    \partial^* E \subset E^{(\frac{1}{2})} \subset \partial ^e E, \nonumber
\end{eqnarray}
with $\mathcal{H}^{n-1}(\partial^e E-\partial^* E) = 0.$
\end{theorem}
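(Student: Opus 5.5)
The statement is Federer's classical structure theorem: for a set $E$ of locally finite perimeter, $\partial^*E \subset E^{(1/2)} \subset \partial^e E$ and $\mathcal{H}^{n-1}(\partial^e E - \partial^* E)=0$. I will prove the three assertions in turn: a blow-up argument for the first inclusion, a triviality for the second, and a covering argument for the null-set claim.

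The second inclusion is immediate from the definitions. If $x\in E^{(1/2)}$, the density limit exists and equals $1/2$, so $x\notin E^{(0)}\cup E^{(1)}$, hence $x\in\partial^e E$.

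For $\partial^*E\subset E^{(1/2)}$ I would use De Giorgi's blow-up theorem. Fix $x\in\partial^*E$ with measure theoretic normal $\nu=\nu_E(x)$, and consider the rescaled sets $E_{x,r}=(E-x)/r$. I would verify the following in order.
\begin{enumerate}
\item The perimeters are locally uniformly bounded: $|\nabla\chi_{E_{x,r}}|(B_R)\le CR^{n-1}$. This follows from the lower density estimate $|\nabla\chi_E|(B_r(x))\ge c r^{n-1}$ at reduced boundary points together with the comparison $|\nabla\chi_E|(B_r(x))\le C r^{n-1}$, which one gets by testing the Gauss--Green formula on $E\cap B_r(x)$.
\item By BV compactness, $\chi_{E_{x,r}}\to\chi_F$ in $L^1_{loc}$ along subsequences.
\item The defining property $\nabla\chi_E(B_r(x))/|\nabla\chi_E|(B_r(x))\to\nu$, combined with lower semicontinuity of the total variation, forces $\nabla\chi_F=\nu|\nabla\chi_F|$. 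Hence $F$ is invariant under translations orthogonal to $\nu$, is monotone in the direction $\nu$, and is therefore a half space $\{y\cdot\nu>t\}$.
\item One shows $t=0$ using the density estimates: both $|E\cap B_r(x)|$ and $|B_r(x)\setminus E|$ are at least $cr^n$, by the relative isoperimetric inequality.
\end{enumerate}
Since the limit is independent of the subsequence, $|E\cap B_r(x)|/|B_r(x)|\to |F\cap B_1|/|B_1|=1/2$.

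The main obstacle is $\mathcal{H}^{n-1}(\partial^eE-\partial^*E)=0$. Take $x\in\partial^eE$. Then $\limsup_r |B_r(x)\cap E|/|B_r| >0$ and $\limsup_r |B_r(x)\setminus E|/|B_r|>0$. By continuity of $r\mapsto |B_r(x)\cap E|/|B_r|$, one gets a sequence $r_k\to0$ along which both densities stay above a fixed $\delta>0$. The relative isoperimetric inequality then gives
\[
\limsup_{r\to0}\frac{|\nabla\chi_E|(B_r(x))}{r^{n-1}}\ge c(\delta)>0 .
\]
Now let $\mu=|\nabla\chi_E|=\mathcal{H}^{n-1}\rtangle\partial^*E$, and observe that $\partial^e E-\partial^*E$ is $\mu$-null because it is disjoint from $\partial^*E$. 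The standard density comparison (a Vitali covering argument) says: if $A$ is $\mu$-null and $\limsup \mu(B_r(x))/r^{n-1}>0$ at each point of $A$, then $\mathcal{H}^{n-1}(A)=0$. To apply it, I would write $\partial^eE-\partial^*E$ as a countable union of pieces on which the upper density exceeds $1/j$. For each such piece, cover it by an open set of small $\mu$-measure and use disjoint Vitali balls to bound $\mathcal{H}^{n-1}$ of the piece by $Cj$ times that $\mu$-measure, which is arbitrarily small.

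The delicate points are the uniform choice of $\delta$ and the fact that the upper density bound need not be uniform in $x$. The countable decomposition by the value of the lower bound handles the latter. Because the statement is classical, in the paper I would simply cite Federer, or the Evans--Gariepy and Maggi texts, rather than reprove it.
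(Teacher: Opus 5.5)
The paper gives no proof of this statement. It is recalled in the preliminaries as Federer's classical theorem, so your closing remark that you would cite Federer, Evans--Gariepy or Maggi is exactly what the paper does. Your sketch is the standard textbook argument: De Giorgi blow-up for $\partial^*E\subset E^{(1/2)}$, then the continuity-in-$r$ trick, the relative isoperimetric inequality and a Vitali-type density comparison for $\mathcal{H}^{n-1}(\partial^eE-\partial^*E)=0$. It is sound apart from one step whose justification is wrong.

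In Step 4, the bounds $|E\cap B_r(x)|\ge cr^n$ and $|B_r(x)\setminus E|\ge cr^n$ at $x\in\partial^*E$ do not follow from the relative isoperimetric inequality. That inequality bounds perimeter from below in terms of volume, not the other way round. The correct source is the global isoperimetric inequality applied to $E\cap B_r(x)$. Combine it with $|\nabla\chi_E|(B_r(x))\le 2\,\mathcal{H}^{n-1}(E\cap\partial B_r(x))$ for small $r$, which comes from Gauss--Green on $E\cap B_r(x)$ together with $x\in\partial^*E$. For $m(r)=|E\cap B_r(x)|$ this gives $m(r)^{(n-1)/n}\le C\,m'(r)$ for a.e. small $r$, which integrates to $m(r)\ge cr^n$; the complement is handled the same way.

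The relative isoperimetric inequality is then what turns these volume bounds into the perimeter lower bound you quote in Step 1. That lower bound is not needed for the uniform bound on blow-up perimeters; only the upper estimate $|\nabla\chi_E|(B_r(x))\le Cr^{n-1}$ is. With the volume bounds correctly in place, your argument for $t=0$ goes through. A half space $\{y\cdot\nu>t\}$ with $t\neq 0$ would be either disjoint from $B_{|t|}$ or contain it, contradicting the scale-invariant bounds passed to the $L^1_{loc}$ limit.
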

{

\begin{theorem}
\label{structure_of_the_reduced_boundary}
(\cite{Enrico}, Theorem 4.4)
If $E \subset \mathbb{R}^n$ is a set of finite perimeter, then we have
\begin{eqnarray}
\partial ^*E = \left(\bigcup_{i \in \mathbb{N}} C_i\right) \cup N, \nonumber
\end{eqnarray}    
where each $C_i$ is compact and $C_i \subset f^{-1}(0)$, where $f$ is a $C^1$ function defined in a neighborhood of $C_i$. Moreover, $\mathcal{H}^{n-1}(N) = 0$. 
\end{theorem}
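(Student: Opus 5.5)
The statement is the classical structure theorem for the reduced boundary (De Giorgi's rectifiability), so I would prove it through De Giorgi's blow-up theorem combined with a Whitney-extension/Lusin-type argument.

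\emph{Step 1: flat blow-ups.} At every $x\in\partial^*E$ the rescaled sets $(E-x)/r$ converge in $L^1_{loc}$ to the half-space $\{y\cdot\nu_E(x)>0\}$ as $r\to0^+$. Moreover, $|\nabla\chi_E|(B_r(x))/(\omega_{n-1}r^{n-1})\to1$. Together with the density estimates, this gives a uniform flatness statement: for each $\eta>0$ and each $x\in\partial^*E$ there is $r_x>0$ such that for $r<r_x$, $\partial^*E\cap B_r(x)$ lies in the slab $\{|(y-x)\cdot\nu_E(x)|<\eta r\}$.

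\emph{Step 2: the decomposition.} Fix $\eta$ small. By Lusin's theorem applied to the Borel map $\nu_E$ with respect to the finite measure $\mathcal{H}^{n-1}\rtangle\partial^*E$, and by an Egorov-type selection of the radii $r_x$, I write $\partial^*E$ as a countable union of sets $K_j$ plus a null set $N$. Each $K_j$ has the following properties. It is compact. $\nu_E$ is continuous on it. There is a uniform $\delta_j>0$ such that for all $x,y\in K_j$ with $|x-y|<\delta_j$,
\[
|(y-x)\cdot\nu_E(x)|\le \eta_j(|x-y|)\,|x-y|,
\]
where $\eta_j(t)\to0$ as $t\to0$. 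This holds because the flatness can be made to hold at every scale once $\eta\to0$ along a sequence.

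\emph{Step 3: Whitney extension.} On each $K_j$ I set $f=0$ and $\nabla f=\nu_E$. The estimate above, together with the continuity of $\nu_E$ on the compact set, is exactly the Whitney compatibility condition for $C^1$ jets. Whitney's extension theorem therefore produces $f\in C^1$ with $f=0$ and $\nabla f=\nu_E\ne0$ on $K_j$, so $K_j\subset f^{-1}(0)$. After splitting $K_j$ into finitely many small pieces, $|\nabla f|\ge 1/2$ near each piece, so $f$ is defined in a neighborhood with nonvanishing gradient. Relabeling the pieces as $C_i$ gives the statement.

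\emph{Main obstacle.} The hardest step is Step 2: upgrading the pointwise flat blow-up into a uniform, two-sided Whitney estimate on large compact pieces. The measure-theoretic selection must control both the scale $\delta_j$ and the oscillation of $\nu_E$ simultaneously. I would first try taking $K_j$ to be the sets where $\sup_{r<1/j}$ of the flatness excess is below $1/k$, intersected with Lusin compacts.
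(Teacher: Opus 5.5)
The paper gives no proof of this statement; it only cites Theorem 4.4 of its reference. Your architecture (Lusin/Egorov selection of compact pieces, then Whitney extension of the jet $(0,\nu_E)$) matches the standard proof, and your Step 3 is correct as written. The gap is in Step 1, on which Step 2 rests.

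It is \emph{not} true that for every $x\in\partial^*E$ and $\eta>0$ one has $\partial^*E\cap B_r(x)\subset\{|(y-x)\cdot\nu_E(x)|<\eta r\}$ for all small $r$. The $L^1_{loc}$ convergence of blow-ups and $|\nabla\chi_E|(B_r(x))\sim\omega_{n-1}r^{n-1}$ only control how much \emph{mass} lies off the tangent plane, not where points of $\partial^*E$ sit. The density estimates at $y\in\partial^*E$ hold only below a threshold $s(y)$ that is not uniform in $y$, so they cannot turn ``small mass'' into ``no points''.

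Concretely, let $N_m$ be a maximal $2^{-m}$-separated subset of $\partial B_1$ and
\[
E=B_1\cup\bigcup_{m\ge3}\ \bigcup_{q\in N_m}B_{4^{-m}}\bigl((1+2^{-m})q\bigr).
\]
Each small ball is at positive distance from the rest of $E$, so its boundary lies in $\partial^*E$. The small balls meeting $B_r(x)$ carry perimeter $O(r^{2(n-1)})=o(r^{n-1})$, so every $x\in\partial B_1$ lies in $\partial^*E$ with $\nu_E(x)=-x$. Yet for every $m\ge3$ there is $y\in\partial^*E$ with $|y-x|\le 3\cdot2^{-m}$ and $|(y-x)\cdot\nu_E(x)|\ge 2^{-m-1}$. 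So your excess $\sup_{y\in\partial^*E\cap B_r(x)}|(y-x)\cdot\nu_E(x)|/r$ is at least $1/8$ at the scales $r=2^{2-m}\to0$, at \emph{every} point of $\partial B_1$. The sets $K_j$ proposed in your last paragraph would therefore exclude all of $\partial B_1$, a set of positive $\mathcal{H}^{n-1}$-measure.

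The missing idea is to prove the two-point estimate only for pairs $x,y$ that \emph{both} lie in the good compact set, using uniformity at $y$ as well as at $x$. By Lusin and Egorov, choose a compact $K$ on which three things hold:
\begin{enumerate}
\item $\nu_E$ is continuous;
\item $|\nabla\chi_E|(B_s(y))\ge c\,s^{n-1}$ for all $y\in K$ and $s<s_0$;
\item the averaged flatness $r^{-n}\int_{B_r(x)}|(z-x)\cdot\nu_E(x)|\,d|\nabla\chi_E|(z)\to0$ holds uniformly in $x\in K$. This limit is a genuine consequence of the blow-up, since $|\nabla\chi_{E_{x,r}}|\to\mathcal{H}^{n-1}$ restricted to the tangent plane.
\end{enumerate}
Now suppose $x,y\in K$, $|x-y|=r$ and $|(y-x)\cdot\nu_E(x)|\ge\eta r$. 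Then $B_{\eta r/2}(y)\subset B_{2r}(x)$ stays at distance at least $\eta r/2$ from the tangent plane at $x$. It carries $|\nabla\chi_E|$-mass at least $c(\eta r/2)^{n-1}$, so the averaged excess at $x$ at scale $2r$ is at least $c(\eta/4)^n$. This is impossible for $r$ small, uniformly on $K$. Equivalently, you can play the two-sided volume density bounds at $y$ against the $L^1$ convergence at $x$.

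This yields exactly the uniform Whitney condition on $K$, after which your Step 3 applies verbatim. This is how the standard proofs (Giusti, Evans--Gariepy) proceed.
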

}
We will use the following closed graph theorem in the analysis of the regular set:
\begin{lemma}
\label{The_Closed_Graph_Theorem_in_Point_Set_Topology}
Assume that $X$ is a Hausdorff topological space, and suppose that $Y$ is a compact Hausdorff topological space. If $f: X \rightarrow Y$ is such that the graph $\{(x,f(x)): x \in X\}$ is closed in $X \times Y$ (under the product topology), then $f$ is continuous on $X.$
\end{lemma}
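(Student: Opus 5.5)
We argue by contradiction, using only compactness of $Y$ and the Hausdorff property of $Y$; the Hausdorff assumption on $X$ plays no real role. Suppose $f$ is not continuous at some $x_0 \in X$. Then there is an open neighborhood $V$ of $f(x_0)$ in $Y$ such that every neighborhood $U$ of $x_0$ contains a point $x_U$ with $f(x_U) \notin V$. We will use this to produce a point $(x_0,y)$ in the closure of the graph with $y \neq f(x_0)$. That contradicts closedness of the graph.

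Since $X$ need not be first countable, we work with nets rather than sequences. Direct the neighborhoods $U$ of $x_0$ by reverse inclusion. The net $(x_U)$ then converges to $x_0$, and $f(x_U) \in Y \setminus V$ for every $U$. The set $Y\setminus V$ is closed in the compact space $Y$, hence compact. So the net $(f(x_U))$ has a subnet converging to some $y \in Y \setminus V$; in particular $y \neq f(x_0)$. Along this subnet the corresponding $x$'s still converge to $x_0$. Hence $(x_0,y)$ is a limit of points $(x,f(x))$ of the graph in the product topology. Since the graph is closed, $(x_0,y)$ lies on the graph, so $y=f(x_0)$. This contradicts $y\notin V$.

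To avoid nets, we can give a direct open-cover version. Fix $x_0$ and an open $V \ni f(x_0)$. For each $y \in K:=Y\setminus V$ we have $(x_0,y)\notin \mathrm{graph}(f)$. Closedness gives a basic open box $U_y\times W_y$ around $(x_0,y)$ disjoint from the graph. Compactness of $K$ yields finitely many $W_{y_1},\dots,W_{y_m}$ covering $K$. Set $U=\bigcap_k U_{y_k}$. If $x\in U$ and $f(x)\in K$, then $f(x)\in W_{y_k}$ for some $k$. Then $(x,f(x))\in U_{y_k}\times W_{y_k}$, which is impossible. So $f(U)\subset V$, and $f$ is continuous at $x_0$. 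This finite-subcover step is the only substantive point. It is where compactness of $Y$ is essential, and this version does not even need $Y$ Hausdorff.
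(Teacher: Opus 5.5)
Your proof is correct. The paper gives no proof of this lemma: it is quoted as a standard point-set fact and used only in Step~2 of Lemma~\ref{higher_dimensional_free_boundary_Lipschitz_graph}. So there is no argument of the paper's to compare yours against.

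Your second argument, the finite-subcover one, is the standard textbook proof and is complete as written. The degenerate case $Y\setminus V=\emptyset$ is trivial: take $U=X$.

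Your net argument is an equivalent reformulation, and it also never uses that $Y$ is Hausdorff. It needs only two facts: a net in the compact set $Y\setminus V$ has a subnet converging to a point of $Y\setminus V$, and a closed set contains the limits of nets lying in it. So your opening sentence claims a hypothesis you do not actually use.

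As you correctly observe, the Hausdorff assumption on $X$ in the paper's statement is superfluous as well. Compactness of $Y$ is the only essential hypothesis.
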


\section{A Characterization of Regular and Singular Points in two dimensions}

The main result of this section is Theorem \ref{A_Dichotomy_for_Free_Boundary_Point} that characterizes singular and regular points in terms of densities in the two dimensional case. We start this section by recalling the characterization of regular and singular points in terms of the angle of the asymptotic cone given in \cite{CL2} (for $n=2$):

\begin{lemma}
\label{fortwo}
Consider $S_i := \{u_i > 0\} \cap \Omega$ and $C_i := \cup_{j \neq i} S_j$ for all $i = 1, ..., K.$ 
Let $x_0 \in \partial S_i \cap \Omega$ be a free boundary point. 
 The {\bf convex asymptotic cone} centered at $x_0$ is defined as the intersection of closed half spaces as follows:
\begin{equation}
\bigcap_{y \in \partial C_i \cap \Omega, d(x_0,y) = \sizeofboundary} \{p \in \mathbb{R}^2:(p-x_0)\cdot (y-x_0) \leq 0)\} 
\end{equation}
The angle at $x_0 \in \partial S_i \cap \Omega,$ denoted by $\textnormal{angle}(x_0),$ is defined to be the opening of the asymptotic cone. Then, a free boundary point is regular if and only if $\textnormal{angle}(x_0)=\pi$. 
\end{lemma}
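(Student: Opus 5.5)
The claim to establish is the equivalence: $x_0$ is regular if and only if $\textnormal{angle}(x_0)=\pi$. Write $Y(x_0):=\{y\in\partial C_i\cap\Omega: d(x_0,y)=\sizeofboundary\}$. The plan rests on two facts about the limit configuration from \cite{CL2}. First, $Y(x_0)$ is nonempty for every free boundary point. Second, $d(S_i,C_i)\geq \sizeofboundary$, and points of $\partial S_i$ are at distance exactly $\sizeofboundary$ from $\partial C_i$. With these in hand, the opening of the asymptotic cone is determined by the set of unit directions
\[
\Theta(x_0):=\{(y-x_0)/\sizeofboundary : y\in Y(x_0)\}\subset S^1 .
\]
The cone $\bigcap_{\theta\in\Theta}\{p:(p-x_0)\cdot\theta\le 0\}$ is the polar cone of the convex cone generated by $\Theta$, shifted to $x_0$.

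For the forward direction, suppose $x_0$ is regular. Then $\Theta=\{\theta_1\}$ consists of a single direction. The cone is therefore the closed half plane $\{(p-x_0)\cdot\theta_1\le 0\}$, whose opening is $\pi$.

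For the converse, argue by contraposition. Suppose $x_0$ is singular, so there are distinct $y_1,y_2\in Y(x_0)$ with distinct unit directions $\theta_1\neq\theta_2$. The cone is contained in $\{(p-x_0)\cdot\theta_1\le0\}\cap\{(p-x_0)\cdot\theta_2\le0\}$. There are two cases.
\begin{enumerate}
\item If $\theta_2\neq-\theta_1$, this intersection is a wedge of opening $\pi-\textnormal{angle}(y_1;x_0;y_2)<\pi$.
\item If $\theta_2=-\theta_1$, the intersection is a line, with opening $0$.
\end{enumerate}
In either case $\textnormal{angle}(x_0)<\pi$.

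The step I expect to need care is justifying that the opening is well defined and that the cone is not degenerate in an inconsistent way. One must check that $\Theta(x_0)$ is closed. This follows from compactness: $Y(x_0)$ is the intersection of the closed set $\partial C_i$ with the sphere $\partial B_{\sizeofboundary}(x_0)$, once we observe that points of $\partial C_i$ at distance $\sizeofboundary$ from $x_0$ stay inside $\Omega$, or else we restrict to $\overline{\Omega}$. For the converse, a lower bound on the opening is not needed; only the strict inequality above is used. The only input borrowed from \cite{CL2} is that $Y(x_0)\neq\emptyset$, so that the cone is at most a half plane. That follows because $x_0\in\partial S_i$ forces $d(x_0,C_i)=\sizeofboundary$: the strict positivity of $u_i$ fails exactly when the ball $B_{\sizeofboundary}(x_0)$ touches $C_i$, in the limit of the equation.
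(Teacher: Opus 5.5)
Your proof is correct and is essentially the argument the paper gives for the equivalence (a)$\Leftrightarrow$(b) in Theorem \ref{Regular_points_Singular_points_in_Higher_Dimensions}: a unique touching point yields a full half-plane (resp.\ half-sphere), while two distinct touching points cut it down strictly; for this two-dimensional lemma itself the paper only cites \cite{CL2}. The closedness of $\Theta(x_0)$ that you flag is not needed, because replacing $\Theta(x_0)$ by its closure does not change the intersection of the closed half-planes.
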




In this section, we provide another characterization of regular and singular points in terms of densities. We assume that the function $H$ is given by \eqref{H1} or \eqref{H2}.  


The following two results were proven in \cite{CL2}.
\begin{lemma}
\label{basic_geometric_lemma}
(\cite{CL2} Lemma 8.1)
Assume that the origin $(0,0) \in \partial S_i \cap \Omega$ is a free boundary point, and let $C := \{(x_1,x_2) \in \mathbb{R}^2: x_2 \geq \alpha|x_1|\},$ where $\alpha > 0,$ be the asymptotic cone of $S_i$ at $(0,0) \in \partial S_i \cap \Omega.$ Then there exist $y_1, y_2 \in \partial C_i \cap \Omega$ such that the ball $B_{\sizeofboundary}(y_1)$ is tangent to the line $x_2 = \alpha x_1$ at $(0,0)$, and the ball $B_{\sizeofboundary}(y_2)$ is tangent to the line $x_2 = -\alpha x_1$ at $(0,0)$.
\end{lemma}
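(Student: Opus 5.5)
We argue purely from the definition of the convex asymptotic cone in Lemma \ref{fortwo} and compactness. Set
\[
Y_0:=\{y\in \partial C_i\cap\Omega:\ d(0,y)=\sizeofboundary\},
\]
so that the cone is $\bigcap_{y\in Y_0}\{p:\ p\cdot y\le 0\}$, and by hypothesis it equals $C=\{x_2\ge \alpha|x_1|\}$. Two facts from \cite{CL2} will be used. First, $d(S_i,C_i)\ge \sizeofboundary$ in the limit configuration. Second, since $0\in\partial S_i$, the set $Y_0$ is nonempty and closed: it is the intersection of the closed set $\overline{\partial C_i}$ with the sphere $\partial B_{\sizeofboundary}(0)$, which lies at positive distance from $\partial\Omega$ because $0\in\Omega$ is interior. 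In particular $Y_0$ is a compact subset of the circle $\partial B_{\sizeofboundary}(0)$.

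The key step is a duality computation. The intersection of half-planes $\bigcap_{y\in Y_0}\{p\cdot y\le 0\}$ is the polar cone of the closed convex cone generated by $Y_0$. Since this polar equals $C$, bipolarity shows that the closed cone generated by $Y_0$ is the polar of $C$:
\[
C^{\circ}=\{q:\ q\cdot p\le 0\ \ \forall p\in C\}.
\]
This is the cone spanned by the two outward normals to the edges of $C$, namely $n_1=(\alpha,-1)$ and $n_2=(-\alpha,-1)$ up to normalization. In $\R^2$ a closed convex cone generated by a compact subset $Y_0$ of a circle has as its extreme rays exactly rays through points of $Y_0$, because an extreme ray cannot be a nontrivial positive combination of other directions.

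Hence there exist $y_1,y_2\in Y_0$ with $y_k=\sizeofboundary\, n_k/|n_k|$. The normals $n_1,n_2$ are not parallel, since $\alpha>0$, so $C^{\circ}$ is a proper cone with two distinct extreme rays. This is also why $\alpha>0$ is needed: if $\alpha=0$ the cone would be a half-plane and $C^\circ$ a single ray.

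It remains to check tangency. The ball $B_{\sizeofboundary}(y_1)$ has $0$ on its boundary, and its inner normal at $0$ is $y_1/|y_1|$, which is perpendicular to the line $x_2=\alpha x_1$. So this line is tangent to $\partial B_{\sizeofboundary}(y_1)$ at $0$, and the same argument gives tangency of $B_{\sizeofboundary}(y_2)$ to $x_2=-\alpha x_1$.

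The main obstacle is the closedness and compactness of $Y_0$, specifically that points of $Y_0$ really lie in $\partial C_i\cap\Omega$ and not only in its closure. I would handle this using the distance $\sizeofboundary$ between $0$ and $\partial\Omega$ together with the fact that limits of points of $\partial C_i$ remain in $\partial C_i$ inside $\Omega$. If this fails, I would redefine $Y_0$ through the closure, as is implicit in \cite{CL2}.
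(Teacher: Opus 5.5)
The paper gives no proof of this lemma; it is quoted from \cite{CL2}. Your duality computation is correct. The set $\bigcap_{y\in Y_0}\{p\cdot y\le 0\}$ is the polar of $\mathrm{cone}(Y_0)$, and bipolarity gives $\overline{\mathrm{cone}}(Y_0)=C^{\circ}=\mathrm{cone}(n_1,n_2)$. $C^{\circ}$ is pointed since $\alpha>0$, and the tangency check is right. In the plane this just says that $Y_0$ lies on the closed arc of $\partial B_{\sizeofboundary}(0)$ from $\sizeofboundary n_2/|n_2|$ to $\sizeofboundary n_1/|n_1|$, and that its angular infimum and supremum are these two endpoints. The bipolar theorem and extreme rays are therefore more machinery than needed.

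The gap is in the one step that carries the content of the lemma: closedness of $Y_0$. Since $p\cdot y\le 0$ is a closed condition in $y$, the cone is unchanged if $Y_0$ is replaced by $\overline{Y_0}$. So duality alone only produces $y_1,y_2\in\overline{Y_0}$. Likewise, the fact that every extreme ray of $\overline{\mathrm{cone}}(Y_0)$ passes through a point of $Y_0$ holds only once $Y_0$ is known to be compact. (It is not true that the extreme rays are ``exactly'' the rays through $Y_0$, as you wrote.)

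Your justification of compactness is false. The fact $0\in\Omega$ gives $d(0,\partial\Omega)>0$, not $d(0,\partial\Omega)>\sizeofboundary$. Free boundary points can lie arbitrarily close to $\partial\Omega$; in Example~\ref{Four_Populations_2D} the free boundaries run into $\partial\Omega$. Hence $\partial B_{\sizeofboundary}(0)$ may meet $\partial\Omega$. Because $\partial C_i$ is closed and $C_i\subset\Omega$, one has $\overline{Y_0}\subset\partial C_i\cap\overline{\Omega}\cap\partial B_{\sizeofboundary}(0)$. So the only possible failure is a limit of contact points lying on $\partial\Omega$, and neither of your fallbacks rules it out:
\begin{itemize}
\item The remark that limits of points of $\partial C_i$ stay in $\partial C_i$ handles only limits inside $\Omega$.
\item Redefining $Y_0$ through its closure proves a weaker statement: $y_1,y_2\in\overline{\partial C_i\cap\Omega}$, possibly on $\partial\Omega$, rather than $y_1,y_2\in\partial C_i\cap\Omega$.
\end{itemize}
To close the argument you need an additional input. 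One option is to show that contact points of $0$ cannot accumulate on $\partial\Omega$ in the two extreme directions; this is automatic if $d(0,\partial\Omega)>\sizeofboundary$. The other is to define the cone through a closed set of contact points, e.g.\ all of $\partial C_i\cap\partial B_{\sizeofboundary}(0)$. Then compactness is immediate and your argument goes through verbatim, but $y_k\in\Omega$ is no longer part of the conclusion.
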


\begin{lemma}
\label{lemma_on_Lipschitz_graph}
(\cite{CL2} Lemma 8.2 and Lemma 8.3)
Assume $x_0 \in \partial S_i \cap \Omega$ is such that $\textnormal{angle}(x_0) \in (0,\pi]$. Then there exists a neighborhood $U$ of $x_0,$ such that upon rotating and translating the coordinate axes $(x_1,x_2)$ if necessary, there is a locally Lipschitz function $\psi: [-r,r] \rightarrow \mathbb{R}$, for some small $r > 0,$ such that $\partial S_i \cap U = \{(x_1,\psi(x_1)): x_1 \in [-r,r]\}.$ If in addition the free boundary point is regular (i.e., $\textnormal{angle}(x_0) = \pi$,) the function $\psi$ is differentiable at the point. Moreover, if there is an open set $U \subset \Omega$ such that every free boundary point of $\partial S_i \cap U$ is regular, we have that $\partial S_i \cap U$ is locally a $C^1$ curve in the plane.
\end{lemma}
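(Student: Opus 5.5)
The plan is to describe the complement of $S_i$ near $x_0$ as a union of balls of radius $\sizeofboundary$, and to read $\partial S_i$ as the upper envelope of the lower boundaries of these balls. I will use the structural fact from \cite{CL2} that near a free boundary point, $x\in \overline{S_i}$ if and only if $d(x, C_i)\ge \sizeofboundary$, with equality exactly on $\partial S_i\cap\Omega$. Consequently, for a small neighbourhood $U$ of $x_0$,
\[
U\setminus \overline{S_i} \;=\; U\cap \bigcup_{y\in Y} B_{\sizeofboundary}(y),\qquad Y:=\{y\in \partial C_i\cap\Omega:\ |y-x_0|<\sizeofboundary+\delta\},
\]
for some small $\delta>0$. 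By compactness, as $\delta\to0$ and $U$ shrinks, every $y\in Y$ relevant in $U$ lies close to the compact contact set $Y_0:=\{y\in\partial C_i\cap\Omega: d(x_0,y)=\sizeofboundary\}$.

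\emph{Lipschitz graph.} Since $\textnormal{angle}(x_0)\in(0,\pi]$, rotate so that the bisector of the asymptotic cone is $e_2$ and the cone is $\{x_2\ge\alpha|x_1|\}$ (or a half plane when the angle is $\pi$). By the definition of the cone and Lemma \ref{basic_geometric_lemma}, every $y\in Y_0$ satisfies $(y-x_0)\cdot e_2\le -\sizeofboundary\sin\theta_0$ for some $\theta_0>0$ determined by the opening. Hence each direction $y-x_0$, $y\in Y_0$, makes an angle at most $\pi/2-\theta_0$ with $-e_2$. By continuity, the same holds with $\theta_0/2$ for all $y\in Y$ once $\delta$ and $U$ are small. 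Near $x_0$, the upper boundary of each ball $B_{\sizeofboundary}(y)$ is then a graph $x_2=g_y(x_1)$ over $[-r,r]$. These graphs are uniformly Lipschitz, with constant about $\cot(\theta_0/2)$, since there the outer normal of the ball stays within $\pi/2-\theta_0/2$ of $e_2$. I then set $\psi:=\sup_{y\in Y} g_y$. This is a supremum of uniformly Lipschitz functions; it is finite and satisfies $\psi(0)=0$ because $x_0\in\partial S_i$, so $\psi$ is Lipschitz. From the displayed identity, $\overline{S_i}\cap U=\{x_2\ge\psi(x_1)\}\cap U$ and $\partial S_i\cap U$ is the graph of $\psi$.

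\emph{Differentiability at regular points.} If $x_0$ is regular, then $Y_0=\{y_1\}$ and the cone is a half plane with $y_1-x_0=-\sizeofboundary e_2$. Take any sequence $x^k=(t_k,\psi(t_k))\to x_0$. The contact points $y^k$ of $x^k$, which lie in $Y$ with $|x^k-y^k|=\sizeofboundary$, converge to $y_1$ by compactness and uniqueness. Thus the exterior balls at $x^k$ have normals tending to $e_2$. Combining the lower bound $\psi\ge g_{y_1}$, where $g_{y_1}$ is differentiable at $0$ with zero derivative, with the upper bound obtained by trapping the graph below the rotated tangent lines $\{x_2\le o(|x_1|)\}$, which come from the cone being the half plane, gives $\psi'(0)=0$.

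\emph{$C^1$ regularity.} If every point of $\partial S_i\cap U$ is regular, define $y:\partial S_i\cap U\to \overline{\partial C_i\cap\Omega}$ by sending a point to its unique contact point. The graph of $y$ is closed, since limits of pairs at distance $\sizeofboundary$ remain such pairs. The target can be restricted to a compact set, so Lemma \ref{The_Closed_Graph_Theorem_in_Point_Set_Topology} makes $y$ continuous. The normal to $\partial S_i$ at $x$ is $(x-y(x))/\sizeofboundary$, so it depends continuously on $x$, and hence $\psi'$ is continuous.

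The step I expect to be the main obstacle is justifying the displayed local identity, i.e.\ that near $x_0$ the points at distance greater than $\sizeofboundary$ from $C_i$ indeed belong to $S_i$. I would take this from the limiting free boundary condition in \cite{CL2}, together with positivity of $u_i$ via the strong maximum principle in the region where the interaction term vanishes.
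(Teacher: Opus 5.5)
Your argument is correct, but it is organized differently from the paper. The paper quotes this lemma from \cite{CL2} without proof, so the comparison is with its proofs of the higher-dimensional analogues, Lemma \ref{higher_dimensional_free_boundary_Lipschitz_graph} and Lemma \ref{main theorem8}. Those proofs proceed as follows:
\begin{itemize}
\item the graph property by contradiction: two boundary points on one vertical line would produce a tangent ball at $x_0$ with centre in $\{x_n\ge 0\}$;
\item continuity of $\psi$ from the closed graph lemma;
\item differentiability by showing that a limit of tangent balls at points of a cone $\{x_n\ge\delta|x'|\}$ cannot be $B_R(y_0)$;
\item $C^1$ from semiconvexity: the uniform exterior balls give $\psi(y)\ge\psi(z)+D\psi(z)\cdot(y-z)-C|y-z|^2$, so every limit of $D\psi(z_l)$ must equal $D\psi(z_0)$.
\end{itemize}

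You instead write $\psi=\sup_{y\in Y}g_y$ explicitly as an upper envelope of circular arcs. Their centres lie in a cone of directions strictly inside the lower half plane, with $\theta_0=\textnormal{angle}(x_0)/2$ by polarity with $\textnormal{conv}(x_0)$.
\begin{itemize}
\item This gives the Lipschitz bound at once, whereas the paper's higher-dimensional argument only yields continuity.
\item It works in any dimension whenever $\textnormal{angle}(x_0)>0$. Then $\textnormal{conv}(x_0)$ has an interior direction $e$ with $(y-x_0)\cdot e\le -cR$ for every contact point $y$.
\item For $C^1$ you apply the closed graph lemma to the contact-point map rather than to $\psi$, and read off continuity of the normal $(x-y(x))/R$. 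This is as short as the semiconvexity route, and it also gives continuity of the partner point.
\end{itemize}

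Two small points.

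\textbf{The step you flag.} It closes by connectedness. The set $V:=\{x_2>\psi(x_1)\}\cap U$ equals $\{d(\cdot,C_i)>R\}\cap U$, is connected, and contains the nonempty open set $S_i\cap U$. This set is also relatively closed in $V$: a point of $V\cap\partial S_i$ would be a free boundary point at distance $>R$ from $C_i$, contradicting Theorem 7.1 of \cite{CL2}. Hence $S_i\cap U=V$. The paper's Step 1 needs the same fact for the ``at least one point'' half of its claim, although it only proves ``at most one''.

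\textbf{The upper bound in the differentiability step.} The bound $\psi(t)\le o(|t|)$ does not come from the shape of the cone, since a single exterior ball at $x_0$ only gives the lower bound. It comes from $x_0\notin B_R(y^k)$, which reads $x^k\cdot\nu_k\le |x^k|^2/(2R)$ with $\nu_k=(x^k-y^k)/R\to e_2$. Combined with $|x^k|\le C|t_k|$, this yields the bound.
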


We will need the following Lemma, which says that the angle at a free boundary point reflects the local geometry near the point.

\begin{lemma}
\label{key_lemma_for_the_limit_density}
Suppose that $x_0 \in \partial S_i \cap \Omega$ is a free boundary point with $\textnormal{angle}(x_0) \in (0,\pi).$ Then:
\begin{enumerate}
\item  For any small $\epsilon > 0$, there exists a $\delta > 0$ and a cone $\mathcal{K}_1$ with vertex $x_0$ and opening $\textnormal{angle}(x_0)-\epsilon$ such that $\mathcal{K}_1 \cap B_{r}(x_0) \subset S_i \cap B_{r}(x_0)$ for any $r < \delta.$
\item For any small $\epsilon > 0$, there exists a $\delta > 0$ and a cone $\mathcal{K}_2$ with vertex $x_0$ and opening $\textnormal{angle}(x_0)+\epsilon$ such that $S_i \cap B_{r}(x_0)  \subset \mathcal{K}_2 \cap B_{r}(x_0)$ for any $r < \delta.$
\end{enumerate}
\end{lemma}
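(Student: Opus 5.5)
Place $x_0$ at the origin and use the normalization of Lemma \ref{basic_geometric_lemma}. Since $\textnormal{angle}(x_0)=\theta\in(0,\pi)$, after a rotation the asymptotic cone is $C=\{x_2\ge\alpha|x_1|\}$ with opening $\theta$. By Lemma \ref{basic_geometric_lemma} there are $y_1,y_2\in\partial C_i\cap\Omega$ with $|y_1|=|y_2|=\sizeofboundary$. The ball $B_\sizeofboundary(y_1)$ is tangent at $0$ to the line $x_2=\alpha x_1$, and $B_\sizeofboundary(y_2)$ is tangent at $0$ to $x_2=-\alpha x_1$. Both balls lie on the side opposite to $C$.

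\emph{Part 2.} Points of $S_i$ are at distance at least $\sizeofboundary$ from $C_i$, so $S_i\cap B_\sizeofboundary(y_1)=S_i\cap B_\sizeofboundary(y_2)=\emptyset$; this is the separation of supports from \cite{CL2}. The exterior of a ball tangent to a line at $0$ is, near $0$, contained in a half-plane cone of opening $\pi+\epsilon/2$. Precisely, $B_r(0)\setminus B_\sizeofboundary(y_1)$ lies in the sector $\{p:\ p\cdot(y_1/|y_1|)\le |p|\sin(\epsilon/4)\}$ whenever $r$ is small. The reason is that a point $p\notin B_\sizeofboundary(y_1)$ satisfies $|p|^2\ge 2p\cdot y_1$, hence $p\cdot y_1/|y_1|\le |p|^2/(2\sizeofboundary)\le |p|\, r/(2\sizeofboundary)$. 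Intersecting the two sectors coming from $y_1$ and $y_2$ gives a cone $\mathcal K_2$ of opening at most $\theta+\epsilon$ containing $S_i\cap B_r(0)$ for $r<\delta$.

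\emph{Part 1.} This is the harder inclusion. I argue by contradiction using Lemma \ref{fortwo} and the characterization of the cone as the intersection of half-planes over all $y\in\partial C_i\cap\Omega$ with $|y|=\sizeofboundary$. Let $\mathcal K_1$ be the cone of opening $\theta-\epsilon$ with the same axis as $C$.

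Suppose there are points $p_k\to0$ with $p_k\in\mathcal K_1\setminus S_i$. The cone lies strictly inside $C$, so $B_{c|p_k|}(p_k)\subset C$ for some fixed $c>0$. I then use the nondegeneracy of the limit configuration: $S_i$ has uniform positive density at the boundary, and $u_i$ is harmonic in $S_i$. From this I want to find a free boundary point $q_k\in\partial S_i$ with $q_k\in\mathcal K_1'$, a slightly larger cone still compactly inside $C$.

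At $q_k$ there is some $z_k\in\partial C_i\cap\Omega$ with $|q_k-z_k|=\sizeofboundary$. This uses the fact from \cite{CL2} that every free boundary point of $S_i$ is at distance exactly $\sizeofboundary$ from $\partial C_i$. Since $B_\sizeofboundary(z_k)\cap S_i=\emptyset$ and $q_k\to0$, compactness gives $z_k\to z$ with $|z|=\sizeofboundary$ and $z\in\partial C_i\cap\Omega$ by closedness. Hence $z$ is one of the points defining the asymptotic cone, so $C\subset\{p\cdot z\le0\}$.

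The contradiction should come from $q_k/|q_k|$ lying in the interior directions of $C$ while $z_k\to z$. Here I expect the main obstacle: this gives only $z\cdot q_k\approx0$ to first order, not a contradiction. The second-order term must be controlled. The point $0$ itself is at distance at least $\sizeofboundary$ from $z_k$ (since $0\in\overline{S_i}$), and $|q_k-z_k|=\sizeofboundary$. Together these give
\[
z_k\cdot \frac{q_k}{|q_k|}\ \ge\ \frac{|q_k|}{2}.
\]
So the direction $q_k/|q_k|$ is asymptotically in the closed half-plane $\{p\cdot z\ge0\}$, and combined with $q_k/|q_k|\in\overline{\mathcal K_1'}\subset \textnormal{int}\,C\cup\{0\}$, this contradicts $C\subset\{p\cdot z\le0\}$.

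I expect the delicate point to be producing the free boundary point $q_k$ inside the smaller cone from $p_k\notin S_i$. I would first try to take $q_k$ as the point of $\partial S_i$ on the segment from $p_k$ to a point of $S_i$ within $\mathcal K_1'$ near scale $|p_k|$. Points of $S_i$ in every subcone of $C$ exist by the density lower bound and the definition of the cone via blow-up, in the spirit of Lemma \ref{lemma_on_Lipschitz_graph}. If $\mathcal K_1'\setminus S_i$ had full relative measure instead, then $p_k$ itself would be approximated by exterior points, and I would work with $\partial S_i$ along the ray from $0$ through $p_k$.
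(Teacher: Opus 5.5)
Your Part 2 is the paper's argument. The inequality $z_k\cdot q_k\ge |q_k|^2/2$ at the end of Part 1 (from $|z_k|\ge R=|q_k-z_k|$) is exactly the paper's triangle comparison giving $\textnormal{angle}(t_k;x_0;(r_k,\psi(r_k)))\le\pi/2$, so the final contradiction is sound.

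The gap is the step you flag yourself: passing from a point $p_k\in\mathcal K_1\setminus S_i$ to a free boundary point $q_k$ in a cone compactly inside $C$. None of your proposed justifications is available:
\begin{enumerate}
\item There is no a priori lower density bound for $S_i$ at free boundary points; the paper explicitly allows cusps, i.e.\ points of density zero. Moreover, positive density at points with $\textnormal{angle}(x_0)\in(0,\pi)$ is derived in Theorem \ref{A_Dichotomy_for_Free_Boundary_Point} \emph{from} this lemma, so using it here is circular.
\item The cone is not defined by blow-up. It is the intersection of the half-planes $\{p\cdot y\le 0\}$ over the centers $y$ of exterior tangent balls at $x_0$, and the claim that $S_i$ actually fills its interior near $x_0$ is precisely Part 1.
\item Your fallback along the segment from $0$ to $p_k$ fails exactly in the case that must be excluded, namely when that segment misses $\overline{S_i}$ entirely. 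An example is $S_i\cap B_\delta$ confined to two thin wedges along the edges of $C$, which Part 2 alone does not rule out.
\end{enumerate}

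The missing ingredient is the one the paper uses: the local graph structure of Lemma \ref{lemma_on_Lipschitz_graph}. In coordinates with $C=\{x_2\ge\alpha|x_1|\}$, $\partial S_i$ near $0$ is the graph of a Lipschitz $\psi$ with $\psi(0)=0$. Moreover, $S_i$ is the region above the graph, because the region below contains points $(0,-s)\in B_R(y_1)$, which are not in $S_i$. Then $p_k=(a_k,b_k)\in\mathcal K_1\setminus S_i$ forces $\psi(a_k)\ge b_k\ge\gamma|a_k|$. So $q_k=(a_k,\psi(a_k))$ is a free boundary point in the closed cone $\{x_2\ge\gamma|x_1|\}$ with $q_k\to 0$, and your estimate finishes the proof. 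This is the paper's statement that $\psi(x_1)\le\gamma|x_1|$ near $0$.

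If you want to avoid the graph lemma, your estimate does show that $\partial S_i$ misses $\overline{\mathcal K_1'}\cap B_\delta\setminus\{0\}$. Hence the connected set $\textnormal{int}\,\mathcal K_1'\cap B_\delta$ lies either in $S_i$ or outside $\overline{S_i}$. The second alternative still needs its own argument, which the proposal does not supply. One way: start from a point of $S_i$ in the thin wedge along the edge $x_2=\alpha x_1$ and move in the direction $-y_1$ until first leaving $S_i$, at a point $q$. The exterior tangent ball at $q$ has center close to $y_1$, so it contains the points $q+t\,y_1$ for small $t>0$. These lie on the traversed segment and hence in $S_i$, a contradiction.
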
 
\begin{proof}   

{\bf 1.} 
Suppose that the asymptotic cone is $\{(x,y) \in \mathbb{R}^2:  y \geq \alpha|x|\}.$ Take $y_1 := (\sizeofboundary \frac{\alpha}{\sqrt{1+\alpha^2}},-\sizeofboundary \frac{1}{\sqrt{1+\alpha^2}})$ and $y_2 := (-\sizeofboundary \frac{\alpha}{\sqrt{1+\alpha^2}},-\sizeofboundary \frac{1}{\sqrt{1+\alpha^2}})$ ($y_1$ and $y_2$ are the red points in Figure \ref{2D_circle}). Then it follows that $\tan (\frac{\textnormal{angle}(x_0)}{2}) = \frac{1}{\alpha},$ and that for any $z = (z_1, z_2) \in \partial C_i \cap \Omega$ satisfying $d(z,x_0) = d(z_0,(0,0)) = \sizeofboundary,$ we have $z_2 \leq -\sizeofboundary \frac{1}{\sqrt{1+\alpha^2}}$. 
Let $\epsilon > 0$ and take $\gamma > \alpha$ with the property that $\tan (\frac{\textnormal{angle}(x_0)-\epsilon}{2}) = \frac{1}{\gamma}$. Let $\mathcal{K}_1$ be the cone with vertex at the origin, opening $\textnormal{angle}(x_0)-\epsilon$ and with central axis at the positive $y$- axis. 
Take also $q_1 := (\sizeofboundary \frac{\gamma}{\sqrt{1+\gamma^2}},-\sizeofboundary \frac{1}{\sqrt{1+\gamma^2}})$ and $q_2 := (-\sizeofboundary \frac{\gamma}{\sqrt{1+\gamma^2}},-\sizeofboundary \frac{1}{\sqrt{1+\gamma^2}}),$ and let $Q \subset \partial B_{\sizeofboundary}(x_0)$ be the shorter arc on the circle $\partial B_{\sizeofboundary}(x_0)$ joining $q_1$ and $q_2$ ($q_1$ and $q_2$ are the blue points in Figure \ref{2D_circle}).
By Lemma \ref{lemma_on_Lipschitz_graph}, $\partial S_i \cap \Omega$ is locally the graph of a Lipschitz continuous function in a neighborhood of the origin. 


 We now show that there exists a $\delta > 0$ such that $\psi(x) \leq \gamma x$ for any $|x| < \delta.$ To show this, assume by contradiction that there exists a sequence $\{r_k\}_{k \in \mathbb{N}}$ with $r_1 > r_2 > \cdots,$ $r_k \rightarrow 0$ as $k \rightarrow \infty,$ and $\psi(r_k) > \gamma r_k$ for all $k \in \mathbb{N}.$ For each $k \in \mathbb{N},$ we pick a $t_k \in \partial C_i \cap \Omega$ with $d(t_k,(r_k,\psi(r_k))) = \sizeofboundary.$ 
Then $B_{\sizeofboundary}(t_k)$ is an exterior tangent ball to $S_i$ at $(r_k,\psi(r_k)).$
Now consider the triangle with vertices $x_0, t_k,$ and $(r_k,\psi(r_k)).$ Observe that $d((r_k,\psi(r_k)),t_k) = \sizeofboundary$ and $d(x_0,t_k) \geq \sizeofboundary,$ so $\textnormal{angle}(t_k;x_0;(r_k,\psi(r_k))) \leq \textnormal{angle}(x_0;(r_k,\psi(r_k));t_k),$ which implies $\textnormal{angle}(t_k;x_0;(r_k,\psi(r_k))) \leq \frac{\pi}{2}.$ Now consider the normalization $\frac{\sizeofboundary t_k}{|t_k|} \in \partial B_{\sizeofboundary}(x_0).$ Since 
\begin{equation*}
    \textnormal{angle}\left(t_k;x_0;(r_k,\psi(r_k))\right) = \textnormal{angle}\left(\frac{\sizeofboundary t_k}{|t_k|};x_0;(r_k,\psi(r_k))\right) \leq \frac{\pi}{2} \nonumber
\end{equation*}
and $\psi(r_k) \geq \gamma r_k$ for all $k \in \mathbb{N},$ we have that $\frac{\sizeofboundary t_k}{|t_k|} \in \partial B_{\sizeofboundary}(x_0)\setminus Q$ for all $k.$ Also note that since each $B_{\sizeofboundary}(t_k)$ is an exterior tangent ball to $S_i$ at $(r_k,\psi(r_k)),$ $\{t_k\}_{k \in \mathbb{N}}$ converges along a subsequence to some $s$ such that $B_{\sizeofboundary}(s)$ is an exterior tangent ball at $x_0.$ As $|t_k| \rightarrow \sizeofboundary$ along this subsequence, we have that $\frac{\sizeofboundary t_k}{|t_k|} \rightarrow s$ along this subsequence. We get that $s \in \overline{\partial B_{\sizeofboundary}(x_0)\setminus Q},$ so $s$ is not in the shorter arc on $\partial B_{\sizeofboundary}(x_0)$ joining $y_1$ and $y_2.$ We get a contradiction. We have shown that there exists a $\delta > 0$ such that $\psi(x) \leq \gamma x$ for any $|x| < \delta.$ It follows that $\mathcal{K}_1 \cap B_{r}(x_0) \subset S_i \cap B_{r}(x_0)$ for any $r$ sufficiently small.

\begin{figure}
\begin{tikzpicture}
\draw (0,0) circle[radius = 2.5 cm];
\draw [blue](0,0)--(1.5,2);
\draw [red] (0,0)--(2,1.5);
\draw (0,0)--(-1.5,2);
\draw (0,0)--(-2,1.5);
\draw (0,0)--(2,-1.5);
\draw (0,0)--(1.5,-2);
\draw (0,0)--(-2,-1.5);
\draw (0,0)--(-1.5,-2);
\filldraw[thick, draw = red, fill = red!60!red!60] (1.5,-2) circle (0.1);
\filldraw[thick, draw = red, fill = red!60!red!60] (-1.5,-2) circle (0.1);
\filldraw[thick, draw = blue, fill = blue!60!blue!60] (2,-1.5) circle (0.1);
\filldraw[thick, draw = blue, fill = blue!60!blue!60] (-2,-1.5) circle (0.1);
\filldraw[thick, draw = green, fill = green!60!green!60] (0,0) circle (0.1);
\filldraw[thick, draw = gray, fill = gray!60!gray!60] (0.1,0.4) circle (0.1);
\filldraw[thick, draw = gray, fill = gray!60!gray!60] (0.2,0.6) circle (0.1);
\filldraw[thick, draw = gray, fill = gray!60!gray!60] (0.4,1.2) circle (0.1);
\filldraw[thick, draw = gray, fill = gray!60!gray!60] (0.3,0.8) circle (0.1);
\filldraw[thick, draw = yellow, fill = yellow!60!yellow!60] (-32:2.5) circle (0.1);
\filldraw[thick, draw = yellow, fill = yellow!60!yellow!60] (-31:2.5) circle (0.1);
\filldraw[thick, draw = yellow, fill = yellow!60!yellow!60] (-29:2.5) circle (0.1);
\filldraw[thick, draw = yellow, fill = yellow!60!yellow!60] (-27:2.5) circle (0.1);
\end{tikzpicture}
\caption{This figure illustrates Lemma \ref{key_lemma_for_the_limit_density}: $L_1$ is the red line $\{(x,y): y = \alpha x, x \geq 0\};$ $L_2$ is the blue line $\{(x,y): y = \gamma x, x \geq 0\}.$ The red points are $y_1 = (\sizeofboundary \frac{\alpha}{\sqrt{1+\alpha^2}},-\sizeofboundary \frac{1}{\sqrt{1+\alpha^2}})$ and $y_2 = (-\sizeofboundary \frac{\alpha}{\sqrt{1+\alpha^2}},-\sizeofboundary \frac{1}{\sqrt{1+\alpha^2}}).$ The blue points are $q_1 = (\sizeofboundary \frac{\gamma}{\sqrt{1+\gamma^2}},-\sizeofboundary \frac{1}{\sqrt{1+\gamma^2}})$ and $q_2 = (-\sizeofboundary \frac{\gamma}{\sqrt{1+\gamma^2}},-\sizeofboundary \frac{1}{\sqrt{1+\gamma^2}}).$ The yellow points on the right of $q_1$ are the points $\frac{\sizeofboundary t_k}{|t_k|}.$ The gray points are $(r_k,\psi(r_k)).$}
\label{2D_circle}
\end{figure}

{\bf 2.} 
Suppose that the cone is $\{(x,y) \in \mathbb{R}^2:  y \geq \alpha|x|\}$. Take $y_1 := (\sizeofboundary \frac{\alpha}{\sqrt{1+\alpha^2}},-\sizeofboundary \frac{1}{\sqrt{1+\alpha^2}})$ and $y_2 := (-\sizeofboundary \frac{\alpha}{\sqrt{1+\alpha^2}},-\sizeofboundary \frac{1}{\sqrt{1+\alpha^2}})$. 
Then it follows that $\tan (\frac{\textnormal{angle}(x_0)}{2}) = \frac{1}{\alpha}$ and that for any $z = (z_1, z_2) \in \partial C_i \cap \Omega$ satisfying $d(z,x_0) = d(z,(0,0)) = \sizeofboundary,$ we have $z_2 \leq -\sizeofboundary \frac{1}{\sqrt{1+\alpha^2}}$ 
Now let $\epsilon > 0$ and take  $\beta \in (0,\alpha)$ with the property that $\tan (\frac{\textnormal{angle}(x_0)+\epsilon}{2}) = \frac{1}{\beta}.$ Assume that the line $\{(x,y) \in \mathbb{R}^2: y = \beta x\}$ intersects the circle $\partial B_{\sizeofboundary}(y_1)$ at $x_0$ and $w_1$, while the line $\{(x,y) \in \mathbb{R}^2: y = -\beta x\}$ intersects the circle $\partial B_{\sizeofboundary}(y_2)$ at the $x_0$ and $w_2$. Take $\delta := d((0,0),w_1) = d((0,0),w_2),$ and let $\mathcal{K}_2$ be the cone with vertex at the origin, opening $\textnormal{angle}(x_0)+\epsilon,$ and with central axis at the positive $y$-axis. It is clear that $S_i \cap B_{r}(x_0)  \subset \mathcal{K}_2 \cap B_{r}(x_0)$ for any $r < \delta.$
\end{proof}

{In Lemma \ref{key_lemma_for_the_limit_density}, {\bf 1.} also holds for $\textnormal{angle}(x_0) = \pi$, and {\bf 2.} also holds for $\textnormal{angle}(x_0) = 0$.}
We next show, using Lemma \ref{key_lemma_for_the_limit_density}, that the density exists at all the free boundary points, and use this fact to obtain a structure theorem for the free boundary set.                 
\begin{theorem}
\label{A_Dichotomy_for_Free_Boundary_Point}
For $n = 2,$ let $H$ be given by \eqref{H1} or \eqref{H2}, and let $x_0 \in \partial S_i \cap \Omega$ be a free boundary point. Then $\lim_{r \rightarrow 0^+} \frac{|S_i \cap B_r(x_0)|}{|B_r(x_0)|}$ exists and is equal to $\frac{\textnormal{angle}(x_0)}{2\pi},$ where $\textnormal{angle}(x_0) \in [0,\pi]$.

Moreover, the following three conditions (a)(b)(c) are equivalent:
\\(a) $x_0$ is a regular free boundary point.
\\(b) $\textnormal{angle}(x_0) = \pi.$
\\(c) $\lim_{r \rightarrow 0^+} \frac{|S_i \cap B_r(x_0)|}{|B_r(x_0)|} = \frac{1}{2}.$
\\Also, the following three conditions (d)(e)(f) are equivalent:
\\(d) $x_0$ is a singular free boundary point.
\\(e) $\textnormal{angle}(x_0) < \pi.$
\\(f) $\lim_{r \rightarrow 0^+} \frac{|S_i \cap B_r(x_0)|}{|B_r(x_0)|} < \frac{1}{2}.$
\end{theorem}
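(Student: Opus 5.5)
The plan is to get the density formula first, by squeezing $S_i\cap B_r(x_0)$ between the two cones of Lemma \ref{key_lemma_for_the_limit_density}. The two equivalences then follow quickly from Lemma \ref{fortwo}.

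\textbf{Density.} Fix $x_0\in\partial S_i\cap\Omega$ and write $\theta=\textnormal{angle}(x_0)\in[0,\pi]$. First, $\theta\le\pi$ always holds, since the asymptotic cone is an intersection of at least one closed half plane. Indeed, every free boundary point has some $y\in\partial C_i\cap\Omega$ with $d(x_0,y)=\sizeofboundary$; this is used implicitly in Lemma \ref{fortwo}.

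Assume first $0<\theta<\pi$ and fix a small $\epsilon>0$. Lemma \ref{key_lemma_for_the_limit_density} gives cones $\mathcal{K}_1\subset\mathcal{K}_2$ with vertex $x_0$ and openings $\theta-\epsilon$ and $\theta+\epsilon$, together with a $\delta>0$, such that for $r<\delta$
\begin{equation*}
\mathcal{K}_1\cap B_r(x_0)\subset S_i\cap B_r(x_0)\subset \mathcal{K}_2\cap B_r(x_0).
\end{equation*}
A planar cone of opening $\phi$ meets $B_r(x_0)$ in a sector of area $\frac{\phi}{2\pi}|B_r|$. Hence
\begin{equation*}
\frac{\theta-\epsilon}{2\pi}\le \liminf_{r\to0}\frac{|S_i\cap B_r(x_0)|}{|B_r(x_0)|}\le\limsup_{r\to0}\frac{|S_i\cap B_r(x_0)|}{|B_r(x_0)|}\le \frac{\theta+\epsilon}{2\pi}.
\end{equation*}
Letting $\epsilon\to0$ shows the limit exists and equals $\theta/2\pi$.

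The endpoint cases use the remark after Lemma \ref{key_lemma_for_the_limit_density}. If $\theta=0$, only the upper inclusion is needed: part 2 holds for angle $0$, which gives $\limsup\le\epsilon/2\pi$, and the lower bound $0$ is trivial. If $\theta=\pi$, part 1 holds for angle $\pi$ and gives $\liminf\ge(\pi-\epsilon)/2\pi$. The matching upper bound is immediate: since $x_0$ has an exterior tangent ball $B_\sizeofboundary(y)$ disjoint from $S_i$, $S_i\cap B_r(x_0)$ misses $B_\sizeofboundary(y)\cap B_r(x_0)$, whose relative area tends to $1/2$. This gives $\limsup\le1/2$.

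\textbf{Equivalences.} Lemma \ref{fortwo} gives (a)$\Leftrightarrow$(b). The density formula gives (b)$\Leftrightarrow$(c), because $\theta/2\pi=1/2$ if and only if $\theta=\pi$. Since singular means not regular and $\theta\in[0,\pi]$, (d), (e), (f) are exactly the negations of (a), (b), (c), with density $\theta/2\pi<1/2$. So (d)$\Leftrightarrow$(e)$\Leftrightarrow$(f).

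The main obstacle is handling the endpoint cases $\theta\in\{0,\pi\}$ rigorously, since Lemma \ref{key_lemma_for_the_limit_density} is stated only for $\theta\in(0,\pi)$. I rely on the remark following it, supplemented by the direct tangent-ball argument for the upper bound at regular points. I also need the existence of at least one tangent ball at each free boundary point, which I take from \cite{CL2}.
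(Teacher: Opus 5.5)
Your proposal is correct and follows the paper's proof: you squeeze $S_i\cap B_r(x_0)$ between the two cones of Lemma \ref{key_lemma_for_the_limit_density}, handle the endpoint angles through the remark after that lemma, and read off the equivalences from Lemma \ref{fortwo} and the density formula. Your exterior-tangent-ball bound $\limsup \le 1/2$ at $\theta=\pi$ is a useful addition, since the paper only says that case is handled ``similarly'' and the remark supplies just the lower bound there.
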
   
\begin{proof}          
We first show that $\lim_{r \rightarrow 0^+} \frac{|S_i \cap B_r(x_0)|}{|B_r(x_0)|}$ exists and is equal to $\frac{\textnormal{angle}(x_0)}{2\pi},$ where $\textnormal{angle}(x_0) \in [0,\pi]$. 
To see this, consider first the case where $\textnormal{angle}(x_0) \in (0,\pi)$. Let $\epsilon > 0$. By Lemma \ref{key_lemma_for_the_limit_density} (1), there exists a $\delta > 0$ and a cone $\mathcal{K}_1$ with vertex $x_0$ and opening $\textnormal{angle}(x_0)-\epsilon$ such that $\mathcal{K}_1 \cap B_{r}(x_0) \subset S_i \cap B_{r}(x_0)$ for any $r < \delta.$ Therefore, for any $r \in (0,\delta),$ we have 
\begin{eqnarray}
\label{lower_bound_for_the_density}
    \frac{\textnormal{angle}(x_0)-\epsilon}{2\pi} = \frac{|\mathcal{K}_1 \cap B_{r}(x_0)|}{|B_r(x_0)|} \leq \frac{|S_i \cap B_{r}(x_0)|}{|B_r(x_0)|}. 
\end{eqnarray}
Hence, since $\epsilon$ is arbitrary,
\begin{eqnarray}
\label{lower_bound_for_the_density_limit_inf_noeps}
\frac{\textnormal{angle}(x_0)}{2\pi} \leq \liminf_{r \rightarrow 0^+}\frac{|S_i \cap B_{r}(x_0)|}{|B_r(x_0)|}. 
\end{eqnarray}
Similarly,  
\begin{eqnarray}
\label{upper_bound_for_the_density_limit_sup_noeps}
    \frac{\textnormal{angle}(x_0)}{2\pi} \geq \limsup_{r \rightarrow 0^+}\frac{|S_i \cap B_{r}(x_0)|}{|B_r(x_0)|}. 
\end{eqnarray}
Combining \eqref{lower_bound_for_the_density_limit_inf_noeps} and \eqref{upper_bound_for_the_density_limit_sup_noeps}, it follows that $\lim_{r \rightarrow 0^+} \frac{|S_i \cap B_r(x_0)|}{|B_r(x_0)|}$ exists and is equal to $\frac{\textnormal{angle}(x_0)}{2\pi}$. The case where $\textnormal{angle}(x_0) = 0$ or $\pi$ can be dealt with similarly, since Lemma \ref{key_lemma_for_the_limit_density} {\bf 1.} also holds for $\textnormal{angle}(x_0) = \pi$, and Lemma \ref{key_lemma_for_the_limit_density} {\bf 2.} also holds for $\textnormal{angle}(x_0) = 0$. Note that $\textnormal{angle}(x_0) \in [0,\pi]$ implies that $\lim_{r \rightarrow 0^+} \frac{|S_i \cap B_r(x_0)|}{|B_r(x_0)|} \leq \frac{1}{2}.$

The equivalence of (a) and (b) follows from immediately from the definition; the equivalence of (b) and (c) follows from the equality $\lim_{r \rightarrow 0^+} \frac{|S_i \cap B_r(x_0)|}{|B_r(x_0)|} = \frac{\textnormal{angle}(x_0)}{2\pi}.$ The conditions (d)(e)(f) are equivalent since (a)(b)(c) are equivalent. 

\end{proof}

By Theorem \ref{A_Dichotomy_for_Free_Boundary_Point}, we have the following:
\begin{remark}
\label{one_criterion_for_regular_points}
Let $x_0 \in \partial S_i \cap \Omega$. If $\partial S_i \cap \Omega$ is a straight line in a neighborhood of $x_0$ then $x_0$ is a regular point. That is, there exists a unique $y \in \partial C_i \cap \Omega$ such that $d(x_0,y) = \sizeofboundary$.
\end{remark}

\section{Density and Angle of a Free Boundary Point in Higher Dimensions}
In this section, we generalize the concepts of angles from the two-dimensional case to higher dimensions and study their relationships with densities. The main theorem of this section is Theorem \ref{Regular_points_Singular_points_in_Higher_Dimensions}. We recall that $C_i = \cup_{j \neq i}S_j$ for all $i = 1, \cdots, K$.

\begin{definition}
\label{Angle_in_Higher_Dimension}
    Let $x_0 \in \partial \{u_i>0\} \cap \Omega$ be a free boundary point.

{\bf (a)} For each $y \in \partial C_i \cap \Omega$ with $d(x_0,y) = \sizeofboundary$, we define the closed semi-sphere
\begin{eqnarray}
    S^+(x_0;y) := \{x \in \partial B_{\sizeofboundary}(x_0): (x-x_0)\cdot(y-x_0) \leq 0.\}
\end{eqnarray}
That is, $S^+(x_0;y)$ is the closed semi-sphere with radius $\sizeofboundary$ centered at $x_0,$ consisting of all points $x \in \partial B_{\sizeofboundary}(x_0)$ such that the angle between the vectors $x-x_0$ and $y-x_0$ is at least $\frac{\pi}{2}$.

{\bf (b)} The {\bf angle} at $x_0,$ denoted by $\textnormal{angle}(x_0),$ is defined by
\begin{eqnarray}
\label{equation_for_computing_angles}
    \textnormal{angle}(x_0) := \frac{\mathcal{H}^{n-1}(\bigcap_{y \in \partial C_i \cap \Omega, d(x_0,y) = \sizeofboundary} S^+(x_0;y))}{\sizeofboundary^{n-1}}.
\end{eqnarray}
The {\bf asymptotic convex cone} at $x_0,$ denoted by $\textnormal{conv}(x_0),$ is defined by 
\begin{equation}
\textnormal{conv}(x_0) := \bigcap_{y \in \partial C_i \cap \Omega, d(x_0,y) = \sizeofboundary}\{p \in \mathbb{R}^n: (p-x_0)\cdot (y-x_0) \leq 0.\}
\end{equation}
Note that $\textnormal{angle}(x_0) = \frac{\mathcal{H}^{n-1}(\partial B_{\sizeofboundary}(x_0) \cap \textnormal{conv}(x_0))}{\sizeofboundary^{n-1}}.$
 Also, note that if $n = 2$, $\textnormal{angle}(x_0)$ coincides with the one given in Lemma \ref{fortwo}.
\end{definition} 
\begin{remark}
For any free boundary point $x_0 \in \partial S_i \cap \Omega$ we have $\textnormal{angle}(x_0) \in [0,\frac{n\omega_n}{2}].$ When $n = 2$, $\omega_2 = \pi,$ so $\textnormal{angle}(x_0) \in [0,\pi],$ which is consistent with Lemma \ref{fortwo}.
$\Box$
\end{remark}

We proceed to show that near a regular free boundary point, $\partial S_i \cap \Omega$ is locally the graph of a differentiable function. The proof extends Lemma 8.2 in \cite{CL2} to arbitrary dimensions.

\begin{lemma}
\label{higher_dimensional_free_boundary_Lipschitz_graph} 
Let $x_0 \in \partial S_i \cap \Omega$ be a regular free boundary point. Then there exists a neighborhood $U$ of $x_0,$ such that upon rotating and translating the coordinate axes $(x_1,x_2,...,x_n)$ if necessary, there is a continuous function $\psi: W \rightarrow \mathbb{R}$, where $W$ is a neighborhood containing the origin in $\mathbb{R}^{n-1}$ such that, in the coordinates $(x_1, x_2, ..., x_n),$ we have
\begin{eqnarray}
    \partial S_i \cap U = \{(x,\psi(x)):x \in W\}. \nonumber
\end{eqnarray}
Moreover, $\psi(0,\cdots,0) = 0,$ and $\psi$ is differentiable at $(0,\cdots,0)$ with $D\psi(0,\cdots,0) = (0,\cdots,0)$.   
\end{lemma}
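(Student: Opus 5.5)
Let $x_1\in\partial C_i\cap\Omega$ be the unique point with $d(x_0,x_1)=\sizeofboundary$. After a rotation and translation we put $x_0=0$ and $x_1=-\sizeofboundary e_n$, so the exterior tangent ball is $B_{\sizeofboundary}(-\sizeofboundary e_n)$. The plan has three steps.

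\textbf{Step 1 (structural facts).} From the limit configuration of \cite{CL2} I will use the following.
\begin{itemize}
\item For every $z\in\partial S_i\cap\Omega$ there is some $y\in\partial C_i\cap\Omega$ with $d(z,y)=\sizeofboundary$.
\item $B_{\sizeofboundary}(y)\cap S_i=\emptyset$ for every $y\in \overline{C_i}$.
\item The free boundary is the boundary of $\{x: d(x,C_i)>\sizeofboundary\}$ inside the region where $u_i$ lives.
\end{itemize}
In particular every free boundary point carries an exterior tangent ball of radius $\sizeofboundary$ centered on $\partial C_i$. I also need a compactness fact. If $z_k\to x_0$ are free boundary points and $y_k$ are corresponding centers, then along a subsequence $y_k\to y$ with $d(x_0,y)=\sizeofboundary$ and $y\in\partial C_i$. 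By regularity $y=x_1$, so every such sequence of centers converges to $x_1$. This is the higher-dimensional analogue of the argument in Lemma \ref{key_lemma_for_the_limit_density}.

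\textbf{Step 2 (cone condition from both sides).} Fix $\theta>0$.
\begin{itemize}
\item \emph{Outer cone.} $S_i\cap B_r$ lies in $\{x_n\ge -\tan\theta\,|x'|\}$ for $r$ small. This is immediate, because $S_i$ avoids $B_{\sizeofboundary}(x_1)$, which contains the cone $\{x_n<-\tan\theta|x'|\}\cap B_r$ for $r<r(\theta)$.
\item \emph{Inner cone.} I show $\{x_n>\tan\theta\,|x'|\}\cap B_r\subset S_i$ for $r$ small. Argue by contradiction: suppose there are $z_k\to0$ with $z_k\notin S_i$ inside the upward cone. Then there are free boundary points $w_k$, on the segment from $z_k$ downward, with $w_k$ lying in (a slightly larger) upward cone, say $w_{k,n}\ge c|w_k|$.

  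It remains to rule out such $w_k$. I would combine two facts. First, $0$ itself is a free boundary point. Second, the exterior ball at $w_k$ has center $y_k\to x_1$, and its direction $(y_k-w_k)/\sizeofboundary$ tends to $-e_n$. Since $0\in \overline{S_i}$ and $B_{\sizeofboundary}(y_k)\cap S_i=\emptyset$, we get $|y_k|\ge \sizeofboundary$. Writing $y_k=w_k+\sizeofboundary\nu_k$ with $\nu_k\to -e_n$ gives
\[
|w_k|^2+2\sizeofboundary\, w_k\cdot\nu_k\ge0,
\]
so $w_k\cdot\nu_k\ge -|w_k|^2/(2\sizeofboundary)$. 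The left side is about $-w_{k,n}+o(|w_k|)\le -c|w_k|+o(|w_k|)$, which is a contradiction for large $k$.

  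In fact this same inequality already shows directly that no free boundary point lies in the upward cone near $0$. Connectedness of the cone minus the origin then gives that the upward cone lies either in $S_i$ or in its complement. The complement is excluded, since otherwise $0$ would not be a limit of $S_i$ from above: by the outer cone, $S_i$ must lie in the upper region near $0$.
\end{itemize}

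\textbf{Step 3 (graph and differentiability).} The two-sided cone condition at every nearby point follows from the same argument with centers $y_z$ close to $x_1$. This gives, for every $z$ near $0$, that the vertical line through $z$ meets $\partial S_i$ in exactly one point. Indeed, above a free boundary point we are in $S_i$, and below it we are outside. Define $\psi(x')$ as that height. Continuity of $\psi$ follows from the closedness of $\partial S_i$ together with Lemma \ref{The_Closed_Graph_Theorem_in_Point_Set_Topology}, taking $Y$ a compact interval. Finally, the cones with arbitrary $\theta$ give $|\psi(x')|\le\tan\theta\,|x'|$ for small $x'$, hence $D\psi(0)=0$.

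\textbf{Main obstacle.} I expect the main obstacle to be justifying, uniformly near $x_0$, that the vertical line crosses $\partial S_i$ only once. This needs the nearby exterior normals to converge to $-e_n$, which is where the uniqueness of $x_1$ and the compactness in Step 1 are used. It also needs the points just above the free boundary to actually lie in $S_i$ rather than in some other region outside $S_i$.
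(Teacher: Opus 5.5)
Your route is essentially the paper's: exterior balls of radius $R$ whose centers converge to $x_1$ by regularity, the inequality coming from $|z-y_w|\ge R$ for $z\in\overline{S_i}$ (this underlies both the paper's Step 1 and its Step 3), and then the closed graph theorem. However, one step fails as written. It is the claim that the upward cone $\{x_n>\tan\theta|x'|\}\cap B_r$ lies in $S_i$ rather than outside $\overline{S_i}$.

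\emph{The gap.} The outer cone only gives $S_i\cap B_r\subset\{x_n\ge-\tan\theta|x'|\}$. It does not prevent $S_i$ from reaching $0$ only through the wedge $\{|x_n|\le\tan\theta|x'|\}$, while the upward cone lies in the region where all populations vanish. Your first attempt has the same defect: a point $z_k\notin S_i$ in the cone need not have any point of $S_i$ below it, so the $w_k$ need not exist. This matters for two reasons. First, without it you do not know that every vertical line near $0$ meets $\partial S_i$, so $\psi$ is not defined on a full neighborhood $W$. Second, your Step 3 reason for uniqueness (``above a free boundary point we are in $S_i$'') rests on it too. The paper's Step 1 only excludes two crossings, so you are right that existence needs an argument; the one you give does not supply it.

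\emph{The repair.} Apply your own inequality to pairs of nearby free boundary points rather than only to the pair $(w,0)$. For free boundary points $z,w\in B_\rho(0)$, the facts $z\in\overline{S_i}$ and $|z-y_w|\ge R$ give $(z-w)\cdot\nu_w\le|z-w|^2/(2R)$. Combine this with $|\nu_w+e_n|\le\epsilon(\rho)\to0$ (your compactness fact) and exchange $z$ and $w$. The result is
\[
|(z-w)_n|\le\bigl(\epsilon(\rho)+\rho/R\bigr)\,|z-w|.
\]
This is the paper's Step 1 made quantitative.

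For small $\rho$ it forbids two free boundary points on one vertical line in $B_\rho(0)$. That gives uniqueness with no reference to which side is $S_i$.

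Now suppose the upward cone were outside $\overline{S_i}$, and take $p\in S_i$ with $|p|$ tiny. The vertical line through $p$ must leave $S_i$ above $p$, before entering the cone. It must also leave $S_i$ below $p$, before entering $B_R(x_1)$. Both exits occur within distance $O(|p|)$ of $p$. This gives two distinct free boundary points on one vertical line in $B_\rho(0)$, a contradiction.

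So the cone lies in $S_i$, and every vertical line over a small $W$ crosses $\partial S_i$ exactly once. Your continuity and differentiability steps then go through.
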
        
\begin{proof} 
Since $x_0 \in \partial S_i \cap \Omega$ is a regular point, there exists a unique $y_0 \in \partial C_i \cap \Omega$ for which $d(x_0,y_0) = \sizeofboundary.$ We fix a system of coordinates $(x_1,x_2, ..., x_n)$ such that the $x_n $ axis coincides with the line segment $\overline {x_0 y_0}.$ Also, the $x_n $  axis is oriented in such a way that positive $x_n $ axis points in the direction of the vector from $y_0$ to $x_0.$ Choose $x_1, x_2, ..., x_{n-1}$ so that $(x_1, x_2, ..., x_n)$ is orthonormal. Then it follows that under these coordinates, $x_0 = (0,0...,0),$ $y_0 = (0,0,...,0,-\sizeofboundary)$ and the convex asymptotic cone $\mathcal{C}$ of $S_i$ at $x_0$ is given by $\mathcal{C} = \{x_n \geq 0\}.$ The proof is divided in three steps:          

{\bf Step 1:} We first show that $\partial S_i \cap \Omega$ is the graph of a function $\psi$ in a small neighborhood $W$ of the origin in $\mathbb{R}^{n-1}.$ It suffices to prove that there is an $r > 0$ small enough such that for any $t \in \mathbb{R}^{n-1}$ with $|t| < r,$ the vertical line $\{(x_1,x_2, ..., x_{n-1}) = t\}$ intersects $\partial S_i \cap B_r(0,\cdots,0)$ in exactly one point. Assume by contradiction that there exists a sequence $\{t_k\}_{k \in \mathbb{N}} \subset \mathbb{R}^{n-1},$ with $|t_k| \rightarrow 0$ as $k \rightarrow \infty,$ such that the line $\{(x_1,x_2, ..., x_{n-1}) = t_k\}$ intersects $\partial S_i \cap B_r(0)$ at two distinct points $(t_k,a_k)$ and $(t_k,b_k)$ with $b_k > a_k.$
Observe that $S_i \cap B_{\sizeofboundary}(y_0) = \emptyset,$ so for all $k \in \mathbb{N}$ sufficiently large, both $(t_k,a_k)$ and $(t_k,b_k)$ are above the ball $B_{\sizeofboundary}(y_0).$ 
Next, for all $k \in \mathbb{N},$ let $y^a_k$ and $y^b_k$ be points in $\partial C_i \cap \Omega$ where $(t_k,a_k)$ and $(t_k,b_k)$ realizes the distances $\sizeofboundary$ from $\partial C_i \cap \Omega$ (the existence of $y^a_k \in \partial C_i \cap \Omega$ and $y^b_k \in \partial C_i \cap \Omega$ follows from Theorem 7.1 in \cite{CL2}). Then the balls $B_{\sizeofboundary}(y^a_k)$ and $B_{\sizeofboundary}(y^b_k)$ are exterior tangent balls to $\partial S_i$ at $(t_k,a_k)$ and $(t_k,b_k),$ respectively. 
Note that $y^a_k$ must be in the lower half ball $\partial B_{\sizeofboundary}(t_k,a_k) \cap \{x_n \leq a_k\}$ for $k$ sufficiently large. (for otherwise, since $d((t_k,a_k),(t_k,b_k)) \rightarrow 0$ as $k \rightarrow \infty,$ the tangent ball $B_{\sizeofboundary}(y^a_k)$ would contain $(t_k,b_k)$ for $k$ large enough, contradicting $d((t_k,b_k), \partial C_i) \geq \sizeofboundary$). Similarly, $y^b_k$ has to belong to the upper half ball $\partial B_{\sizeofboundary}(t_k,b_k) \cap \{x_n \geq b_k\}$ for all $k$ sufficiently large. This implies that the tangent balls $B_{\sizeofboundary}(y^b_k)$ converges (along a subsequence) to a tangent ball to $S_i$ at $(0,0...,0),$ say $B_{\sizeofboundary}(y^b),$ with $y^b \in \{x_n \geq 0\}.$ However, we already have that $B_{\sizeofboundary}(y_0)$ is the unique tangent ball at $x_0.$ Consequently, we have $y_0 = y^b \in \{x_n \geq 0\}.$ This contradicts to the fact that $y_0 = (0,0,...,0,-\sizeofboundary).$ Hence, $\partial S_i \cap \Omega$ is the graph of a function $\psi$ in a neighborhood  of $x_0$ in $\mathbb{R}^{n-1}$, and Step 1 is proved.

{\bf Step 2}: We show that the function $\psi$ is continuous near the origin. We note that the graph of $\psi$ over $\overline {B_{\frac{r}{2}}(0,\cdots,0)}$ is closed in $\mathbb{R}^n$, so from Lemma \ref{The_Closed_Graph_Theorem_in_Point_Set_Topology} it follows that the function $\psi$ is continuous near the origin.      

{\bf Step 3}: In order to establish that $\psi$ is differentiable at the origin with $D\psi(0,\cdots,0) = (0,\cdots,0)$
we show that $\lim_{h \rightarrow 0}\frac{|\psi(h)|}{|h|} = 0$. Assume by contradiction that this fails, so there is a small $\delta > 0$ and a sequence $\{h_k\}_{k \in \mathbb{N}}$ with $|h_k| \downarrow 0$ and $\frac{|\psi(h_k)|}{|h_k|} \geq \delta$ for all $k \in \mathbb{N}$, so $|\psi(h_k)| \geq \delta |h_k|$ for all $k \in \mathbb{N}$. Since the graph of $\psi$ is above the ball centered at $(0,\cdots,-\sizeofboundary)$ with radius $\sizeofboundary$, we see that in fact $\psi(h_k) \geq \delta |h_k|$ for all $k \in \mathbb{N}$ (i.e., each point $(h_k,\psi(h_k))$ is in the cone $K_{\delta} := \{(x,y) \in \mathbb{R}^{n-1} \times \mathbb{R}: y \geq \delta |x|\}$). For each $k \in \mathbb{N}$ we pick a point $p_k \in \partial C_i \cap \Omega$ with $d(p_k,(h_k,\psi(h_k))) = \sizeofboundary$, and so $B_{\sizeofboundary}(p_k)$ is a exterior tangent ball of $S_i \cap \Omega$ at $(h_k,\psi(h_k))$. Let $p \in \partial C_i \cap \Omega$ be a subsequential limit of $\{p_k\}_{k \in \mathbb{N}}$, so $B_{\sizeofboundary}(p)$ is an exterior tangent ball at $(0,\cdots,0)$. Since each point $(h_k,\psi(h_k))$ is in the cone $K_{\delta} := \{(x,y) \in \mathbb{R}^{n-1} \times \mathbb{R}: y \geq \delta |x|\}$, we have that $B_{\sizeofboundary}(p) \neq B_{\sizeofboundary}(y_0)$. This contradicts to the fact that $x_0$ is a regular point. Hence $\psi$ is differentiable at the origin with $D\psi(0,\cdots,0) = (0,\cdots,0)$.

\end{proof}
We will need the following lemma, whose proof is standard and is therefore omitted.   
\begin{lemma}
\label{density_lemma}
Let $U \subset \mathbb{R}^{n-1}$ be an open set containing the origin, and let $f : U \rightarrow \mathbb{R}$ be continuous on $U$ and differentiable at the origin with $f(0,\cdots,0) = 0,$ and $Df(0,\cdots,0) = (0,\cdots,0)$. Take $W := \{(x,y): x \in U, y \in \mathbb{R}, y > f(x)\} \subset \mathbb{R}^n$ to be the epigraph of $f$ over $U$. Assume moreover that $W$ satisfies an exterior sphere condition at the origin. 
Then we have
\begin{eqnarray}
\label{geometry_lemma_density_is_one_half}
    \lim_{r \rightarrow 0^+}\frac{|B_r(0,\cdots,0) \cap W|}{|B_r(0,\cdots,0)|} = \frac{1}{2}.
\end{eqnarray}
\end{lemma}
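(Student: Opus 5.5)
The plan is to squeeze $W\cap B_r$ between two sets whose measures are asymptotically half of $|B_r|$. The upper bound comes from differentiability of $f$ alone. The lower bound comes from differentiability together with the requirement that $W$ lies on the correct side of the graph near the origin. The exterior sphere condition at the origin tells us the exterior ball is tangent to the horizontal hyperplane $\{y=0\}$ from below, since it must be tangent to the graph and $Df(0)=0$. This only reinforces the picture; the essential information is the differentiability.

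First, I fix $\eta>0$. By differentiability at the origin with $f(0)=0$ and $Df(0)=0$, there is $\delta>0$ such that $|f(x)|\le \eta|x|$ whenever $|x|<\delta$; I also take $\delta$ small so that $B'_\delta(0)\subset U$. Then for every $r<\delta$ the following inclusions hold:
\begin{equation*}
B_r(0)\cap\{y>\eta|x|\}\subset B_r(0)\cap W\subset B_r(0)\cap\{y>-\eta|x|\}.
\end{equation*}
The left inclusion holds because $y>\eta|x|\ge f(x)$. The right inclusion holds because any $(x,y)\in W\cap B_r$ has $|x|<r<\delta$, so $y>f(x)\ge -\eta|x|$. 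Here I need that for $(x,y)\in B_r(0)$ with $r<\delta$, the point $x$ lies in $U$, so that $W\cap B_r$ is exactly described by the epigraph inequality. This follows from the choice $B'_\delta\subset U$.

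Second, I compute the measures of the two comparison regions by scaling. Since both cones are dilation invariant,
\begin{equation*}
\frac{|B_r\cap\{y>\pm\eta|x|\}|}{|B_r|}=\frac{|B_1\cap\{y>\pm\eta|x|\}|}{|B_1|}=:\theta_\pm(\eta),
\end{equation*}
independent of $r$. By dominated convergence, as $\eta\to0$ the characteristic functions of these cones intersected with $B_1$ converge to the characteristic function of $B_1\cap\{y>0\}$ almost everywhere (off the null set $\{y=0\}$). Hence $\theta_\pm(\eta)\to\tfrac12$.

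Third, I conclude. Taking $\liminf$ and $\limsup$ as $r\to0^+$ gives
\begin{equation*}
\theta_+(\eta)\le\liminf_{r\to0^+}\frac{|B_r\cap W|}{|B_r|}\le\limsup_{r\to0^+}\frac{|B_r\cap W|}{|B_r|}\le\theta_-(\eta),
\end{equation*}
and letting $\eta\to0$ yields \eqref{geometry_lemma_density_is_one_half}. The only mildly delicate point is the bookkeeping that the ball $B_r$ projects into $U$, which is handled by the choice of $\delta$. The exterior sphere hypothesis could alternatively be used to replace the lower comparison cone $\{y>-\eta|x|\}$ by the complement of the exterior ball, but this is not needed. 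In the application to Lemma \ref{higher_dimensional_free_boundary_Lipschitz_graph}, continuity of $\psi$ ensures $S_i\cap U$ coincides with the epigraph, so that is where the lemma is used.
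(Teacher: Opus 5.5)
The paper gives no proof of this lemma; it calls it standard and omits it. Your squeeze between the cones $\{y>\eta|x|\}$ and $\{y>-\eta|x|\}$ is correct and complete, and is presumably the standard argument the authors had in mind. You are also right that the exterior sphere hypothesis plays no role in the density computation. Continuity, which makes $W$ open and hence measurable, together with $f(x)=o(|x|)$ already forces the density to be $\tfrac12$.

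One correction to your closing remark about the application in Theorem \ref{Regular_points_Singular_points_in_Higher_Dimensions}: continuity of $\psi$ alone does not show that $S_i$ is locally the epigraph rather than the subgraph, or both. What settles it is the following.
\begin{enumerate}
\item In a small cylinder, the regions strictly above and strictly below a continuous graph are each connected.
\item $S_i$ is open and its boundary there is exactly the graph. So $S_i$ contains each region entirely or misses it entirely.
\item The lower region meets the ball $B_{\sizeofboundary}(y_0)$, which is disjoint from $S_i$. Hence $S_i$ misses the lower region.
\item Since $x_0\in\partial S_i$, $S_i$ is nonempty near $x_0$. Hence $S_i$ contains the upper region.
\end{enumerate}
So the exterior ball is really needed in the application, to identify $S_i$ with $W$, not in the lemma itself.
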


By combining the above results, we are able to prove the following characterization of regular and singular points. In what follows, $\omega_n := \frac{\pi^{\frac{n}{2}}}{\Gamma(\frac{n}{2}+1)}$ is the volume of the unit closed ball in $\mathbb{R}^n$. 

\begin{theorem}
\label{Regular_points_Singular_points_in_Higher_Dimensions}
In arbitrary dimension, let $H$ be given by \eqref{H1} or \eqref{H2}. Then for $\mathcal{H}^{n-1}$-a.e. $x_0 \in \partial \{u_i > 0\} \cap \Omega,$ the density $\lim_{r \rightarrow 0^+} \frac{|B_r(x_0) \cap S_i|}{|B_r(x_0)|}$ exists.
\\ Moreover, for a free boundary point $x_0 \in \partial S_i \cap \Omega$, the following three conditions are  equivalent:
\\(a) $x_0$ is a regular free boundary point.
\\(b) $\textnormal{angle}(x_0) = \frac{n\omega_n}{2},$ (in the sense of Definition \ref{Angle_in_Higher_Dimension}.) 
\\(c) $\lim_{r \rightarrow 0^+}\frac{|B_r(x_0)\cap S_i|}{|B_r(x_0)|} = \frac{1}{2}.$ 
\\In addition, the following conditions are also equivalent:
\\(d) $x_0$ is a singular free boundary point.
\\(e) $\textnormal{angle}(x_0) < \frac{n\omega_n}{2},$ (in the sense of Definition \ref{Angle_in_Higher_Dimension}.) 
\\(f) $\limsup_{r \rightarrow 0^+}\frac{|B_r(x_0)\cap S_i|}{|B_r(x_0)|} < \frac{1}{2}.$
\end{theorem}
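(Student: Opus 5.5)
For the existence of the density at $\mathcal{H}^{n-1}$-a.e. free boundary point, I will use that $S_i$ has finite perimeter (\cite[Theorem 6.5]{CL2}) together with Federer's Theorem \ref{Federer_Theorem}. Every point of $\partial^* S_i$ lies in $S_i^{(1/2)}$, so the density exists there. It remains to see that $\partial S_i\cap\Omega$ differs from $\partial^e S_i$ by an $\mathcal{H}^{n-1}$-null set modulo points of $S_i^{(0)}\cup S_i^{(1)}$, where the density also exists. Points of $S_i^{(1)}$ on the topological boundary cannot occur. Indeed, every $x_0\in\partial S_i\cap\Omega$ has an exterior tangent ball $B_R(y)$ with $y\in\partial C_i$ (Theorem 7.1 in \cite{CL2}), and $B_R(y)\cap S_i=\emptyset$, so the upper density of $S_i$ at $x_0$ is at most $1/2$. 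Hence $\partial S_i\cap\Omega\subset \partial^e S_i\cup S_i^{(0)}$. Since $\mathcal{H}^{n-1}(\partial^eS_i\setminus\partial^*S_i)=0$, the density exists $\mathcal{H}^{n-1}$-a.e.

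For (a)$\Leftrightarrow$(b): if $x_0$ is regular, there is a unique $y$, and $\textnormal{conv}(x_0)$ is a closed half-space. Its intersection with $\partial B_R(x_0)$ is a closed half-sphere of measure $\frac{n\omega_n}{2}R^{n-1}$, so (b) holds. Conversely, if $y_1\neq y_2$ both realize the distance $R$, then $S^+(x_0;y_1)\cap S^+(x_0;y_2)$ is the intersection of two distinct half-spheres (the directions $y_1-x_0,y_2-x_0$ are distinct unit-length vectors, not antipodal or antipodal). In either case this is a spherical lune missing a set of positive $\mathcal{H}^{n-1}$ measure of the half-sphere, so the angle is $<\frac{n\omega_n}{2}$. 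This also gives (d)$\Leftrightarrow$(e).

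For (a)$\Rightarrow$(c), I combine Lemma \ref{higher_dimensional_free_boundary_Lipschitz_graph} with Lemma \ref{density_lemma}. Near $x_0$, $S_i$ is the epigraph of a continuous $\psi$ differentiable at $0$ with zero gradient. The exterior sphere condition is supplied by $B_R(y_0)$. I need to check that $S_i\cap U$ is exactly the epigraph, not the subgraph or a union. $S_i$ is open and its boundary in $U$ is the graph, so $U\setminus\textnormal{graph}$ has two components, each either inside or outside $S_i$. The lower component meets $B_R(y_0)$, which is disjoint from $S_i$, and $x_0\in\partial S_i$ forces the upper one to be in $S_i$. Thus the density is $1/2$.

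The key step, and the one I expect to be the main obstacle, is (d)$\Rightarrow$(f), i.e.\ showing that a singular point has upper density strictly below $1/2$. Then (c)$\Rightarrow$(a) follows, since (c) is incompatible with (f), and (f)$\Rightarrow$(d) follows from (a)$\Rightarrow$(c). Let $y_1\neq y_2$ realize distance $R$ from $x_0$. Then $S_i\cap B_r(x_0)$ avoids $B_R(y_1)\cup B_R(y_2)$. Rescaling by $r$ and letting $r\to0$, the rescaled balls converge to the open half-spaces $\{(p-x_0)\cdot(y_j-x_0)>0\}$. By dominated convergence,
\[
\limsup_{r\to0}\frac{|B_r(x_0)\cap S_i|}{|B_r(x_0)|}\le \frac{|B_1\setminus(H_1\cup H_2)|}{|B_1|},
\]
where $H_j$ are the two distinct open half-spaces through the origin. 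The union of two distinct such half-spaces covers strictly more than half of $B_1$, by a wedge of positive measure. Hence the bound is strictly less than $1/2$, which gives (f). The care needed is in the convergence of $r^{-1}(B_R(y_j)-x_0)\cap B_1$ to $H_j\cap B_1$ in measure, which is elementary since the symmetric difference lies in a slab of width $O(r)$.
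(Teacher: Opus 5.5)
Your proposal is correct and follows the paper's proof essentially step for step. You use the half-sphere comparison for (a)$\Leftrightarrow$(b) and (d)$\Leftrightarrow$(e), Lemmas \ref{higher_dimensional_free_boundary_Lipschitz_graph} and \ref{density_lemma} for (a)$\Rightarrow$(c), the two exterior balls $B_R(y_1)\cup B_R(y_2)$ for (d)$\Rightarrow$(f) and hence (c)$\Rightarrow$(a), and finite perimeter plus Federer's Theorem \ref{Federer_Theorem} for the a.e.\ existence of the density. Your extra checks, that $S_i$ near $x_0$ is the epigraph rather than the subgraph and that the blown-up balls differ from the half-spaces only in an $O(r)$ slab, fill in details the paper leaves implicit.
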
     
\begin{proof} 
{   

We first check that for a free boundary point $x_0 \in \partial \{u_i > 0\} \cap \Omega$, the conditions (a)(b)(c) are all equivalent. 
   
Assume that (a) holds, so there is a unique $z \in \partial C_i \cap \Omega$ for which $d(x_0,z) = \sizeofboundary.$ Hence, $\cap_{y \in \partial C_i \cap \Omega, d(x_0,y) = \sizeofboundary} S^+(x_0;y) = S^+(x_0;z),$ and $\mathcal{H}^{n-1}(\cap_{y \in \partial C_i \cap \Omega, d(x_0,y) = \sizeofboundary} S^+(x_0;y)) = \mathcal{H}^{n-1}(S^+(x_0;z)) = \frac{1}{2}n\omega_n \sizeofboundary^{n-1}.$ This shows that
\begin{equation}
    \textnormal{angle}(x_0) = \frac{\frac{1}{2}n\omega_n \sizeofboundary^{n-1}}{\sizeofboundary^{n-1}} = \frac{n \omega_n}{2}, \nonumber 
\end{equation}
which is (b). 

We now show that (b) implies (a). Assume that (b) holds, so $\frac{n\omega_n}{2} = \textnormal{angle}(x_0) = \frac{\mathcal{H}^{n-1}(\cap_{y \in \partial C_i \cap \Omega, d(x_0,y) = \sizeofboundary} S^+(x_0;y))}{\sizeofboundary^{n-1}}.$ This shows that 
\begin{eqnarray}
\label{Area_of_The_Intersection_1}
\mathcal{H}^{n-1}(\cap_{y \in \partial C_i \cap \Omega, d(x_0,y) = \sizeofboundary} S^+(x_0;y)) = \frac{n\omega_n \sizeofboundary^{n-1}}{2}. 
\end{eqnarray}
If there exists $z_1, z_2 \in \partial C_i \cap \Omega$ with $z_1 \neq z_2$ and $d(z_1,x_0) = d(z_2,x_0) = \sizeofboundary$, then $S^+(x_0;z_1) \cap S^+(x_0;z_2)$ is properly contained in $S^+(x_0;z_1)$, and hence 
\begin{equation}
\mathcal{H}^{n-1}(S^+(x_0;z_1) \cap S^+(x_0;z_2)) < \frac{n\omega_n \sizeofboundary^{n-1}}{2},
\end{equation}
contradicting \eqref{Area_of_The_Intersection_1}. This shows that there is a unique $z \in \partial C_i \cap \Omega$ satisfying $d(z,x_0) = \sizeofboundary.$ Thus, $x_0$ is a regular free boundary point and (a) holds.  

We now show that (a) implies (c). Assume that (a) holds, so $x_0$ is a regular free boundary point. By Lemma \ref{higher_dimensional_free_boundary_Lipschitz_graph}, in some systems of coordinates, $\partial S_i \cap \Omega$ is locally the graph of a function $\psi$, $\psi(0,\cdots,0) = 0$ and $\psi$ is differentiable at the origin with $D\psi(0,\cdots,0) = (0,\cdots,0)$. By Lemma \ref{density_lemma} and the fact that the free boundary $\partial\{u_i > 0\} \cap \Omega$ satisfies an exterior sphere condition, it follows that $\lim_{r \rightarrow 0^+}\frac{|B_r(x_0)\cap S_i|}{|B_r(x_0)|}$ exists and is equal to $\frac{1}{2},$ whence (c). 

We now show that (c) implies (a). Assume that (c) holds, so $\lim_{r \rightarrow 0^+}\frac{|B_r(x_0)\cap S_i|}{|B_r(x_0)|} = \frac{1}{2}.$ To show that $x_0$ is a regular free boundary point, assume by contradiction that $x_0$ is a singular point, so there exist two distinct $y_1, y_2 \in \partial C_i \cap \Omega$ with $d(x_0,y_1) = d(x_0,y_2) = \sizeofboundary.$ Note that $\textnormal{angle}(y_1; x_0; y_2) > 0.$ It follows that, for all $r > 0$ small, we have $\frac{|B_r(x_0) \cap S_i|}{|B_r(x_0)|} \leq \frac{|B_r(x_0)\setminus(B_{\sizeofboundary}(y_1) \cup B_{\sizeofboundary}(y_2))|}{|B_r(x_0)|}.$ Hence, $\limsup_{r \rightarrow 0^+} \frac{|B_r(x_0) \cap \{u_i > 0\}|}{|B_r(x_0)|} < \frac{1}{2}.$ This contradicts our assumption. Therefore, (a) holds. 

The equivalence of (d) and (e) follows from the fact that (a) and (b) are equivalent. We show that (d) implies (f). Assume that (d) holds,  so there exist two distinct $y_1, y_2 \in \partial C_i \cap \Omega$ with $d(x_0,y_1) = d(x_0,y_2) = \sizeofboundary.$ Note that $\textnormal{angle}(y_1; x_0; y_2) > 0.$ It follows that for all $r > 0$ small, we have that $\frac{|B_r(x_0) \cap S_i|}{|B_r(x_0)|} \leq \frac{|B_r(x_0)-(B_{\sizeofboundary}(y_1) \cup B_{\sizeofboundary}(y_2))|}{|B_r(x_0)|}.$ Hence, $\limsup_{r \rightarrow 0^+} \frac{|B_r(x_0) \cap S_i|}{|B_r(x_0)|} < \frac{1}{2}$.

We show that (f) implies (d). Assume that (f) holds. If $x_0$ is regular, then by the equivalence of (a) and (c), we have $\lim_{r \rightarrow 0^+}\frac{|B_r(x_0)\cap S_i|}{|B_r(x_0)|} = \frac{1}{2},$ which contradicts to the fact that $\limsup_{r \rightarrow 0^+}\frac{|B_r(x_0)\cap S_i|}{|B_r(x_0)|} < \frac{1}{2}.$ Therefore, $x_0$ is singular and (d) holds. 

In order to see that, for $\mathcal{H}^{n-1}$ a.e. free boundary point $x_0 \in \partial \{u_i > 0\} \cap \Omega,$ the density of $S_i$ at $x_0$ exists, note that by \cite[Corollary 6.5]{CL2}, the set $\{u_i > 0\} \cap \Omega$ is of finite perimeter. We have $\partial \{u_i > 0\} \cap \Omega = \Gamma^{\textnormal{cusp}}_i \cup \Gamma^{\textnormal{reg}}_i \cup ((\partial \{u_i > 0\} \cap \Omega)-(\Gamma^{\textnormal{cusp}}_i \cup \Gamma^{\textnormal{reg}}_i))$. The free boundary points on $\Gamma^{\textnormal{cusp}}_i$ have density zero, and the free boundary points on $\Gamma^{\textnormal{reg}}_i$ have density $\frac{1}{2}.$ Also, the set $((\partial \{u_i > 0\} \cap \Omega)-(\Gamma^{\textnormal{cusp}}_i \cup \Gamma^{\textnormal{reg}}_i)) \subset \partial^e E - \partial^* E,$ with $E = \{u_i > 0\} \cap \Omega$ (recall that $\partial^e E$ is the measure theoretic boundary and $\partial^* E$ is the reduced boundary). Since $\mathcal{H}^{n-1}(\partial^e E - \partial^* E) = 0$ by Federer's Theorem (see Theorem \ref{Federer_Theorem}), the result follows.   
}
\end{proof}
We have the following decomposition of the free boundary:       
\begin{remark}
\label{the_structure_of_the_free_boundary_in_higher_dimensions}
The free boundary set $\partial \{u_i > 0\} \cap \Omega$ can also be written as 
\begin{equation}
\label{structure}
\left(\bigcup_{i \in \mathbb{N}} C_i\right) \cup \Gamma^{\textnormal{cusp}}_i \cup Z,
\end{equation}
where the compact sets $C_i$ are as in Theorem \ref{structure_of_the_reduced_boundary} and $\mathcal{H}^{n-1}(Z) = 0$. This follows from the structure theorem of the reduced boundary with $E := \{u_i > 0\} \cap \Omega$ stated in Theorem \ref{structure_of_the_reduced_boundary}; that is, $\partial ^*E \cap \Omega = \left(\bigcup_{i \in \mathbb{N}} C_i\right) \cup N$
and $\mathcal{H}^{n-1}( N)=0$. Setting $Z := N \cup ((\partial^e E-\partial^* E)\cap\Omega)$ we have that $\mathcal{H}^{n-1}(Z) = 0$ and hence \eqref{structure} follows. 
However, unlike the two-dimensional case, for higher dimensions ($n \geq 3$) it remains open if the set of all cusps has $\mathcal{H}^{n-1}$ measure zero.
$\Box$
\end{remark}

\section{Structure of the Free Boundary}  
For $n = 2$, it was proven in \cite[Lemma 8.9]{CL2} that all singular points are isolated and that, if $\partial S_i \cap \Omega$ consists of only regular points, then $\partial S_i \cap \Omega$ is locally a $C^1$ curve. For $n > 2$, in this section we show that, if the angles at the singular points are bounded away from $\frac{n\omega_n}{2}$, the regular set is open in the free boundary. We obtain a structure result for the free boundary (Theorem \ref{Combined_Results_on_Regular_Set}) and we study the configuration when all supports are convex sets (Theorem \ref{Convex}). We start by splitting the set of singular points in two categories. 

\begin{definition}
\label{twotypes}
A singular free boundary point $x_0 \in \Gamma^{\textnormal{sin}}_i$ is said to be of type 1 if there exist $j, k \in \{1,...,K\} - \{i\}, j \neq k$ such that $d(x_0,y_1) = d(x_0,y_2)$ for some $y_1 \in \partial S_j \cap \Omega$ and $y_2 \in \partial S_k \cap \Omega.$ A singular free boundary point $x_0 \in \Gamma^{\textnormal{sin}}_i$ is said to be of type 2 if it is not of type 1. We denote by $\Gamma^{\textnormal{sin}}_{i,1}$ the set of all singular free boundary points in $\Gamma^{\textnormal{sin}}_i$ that are of type 1, and by $\Gamma^{\textnormal{sin}}_{i,2}$ the set of all singular free boundary points in $\Gamma^{\textnormal{sin}}_i$ that are of type 2.
\end{definition}

By the above definition, a singular free boundary point is of type 1 if it realizes the distance to two different distinct populations, and is of type 2 otherwise. Note that, if $K = 2,$ all singular free boundary points are of type 2. We have the following:
\begin{remark}
\label{remark_on_type_I_singular_points}
If $x_0 \in \Gamma^{\textnormal{sin}}_{i,1}$ is a type 1 singular point, $\textnormal{angle}(x_0) \leq \frac{n\omega_n}{3}$. To see this, note that there exist $j, k \in \{1,...,K\} - \{i\}, j \neq k$ such that $d(x_0,y_1) = d(x_0,y_2)$ for some $y_1 \in \partial S_j \cap \Omega$ and $y_2 \in \partial S_k \cap \Omega.$ Since different populations are at distance exactly $\sizeofboundary$ from each other, we have $d(y_1,y_2) \geq \sizeofboundary$. Recalling \eqref{equation_for_computing_angles} for computing the angles, we have that 
\begin{eqnarray}
\textnormal{angle}(x_0) &:=& \frac{\mathcal{H}^{n-1}(\bigcap_{y \in \partial C_i \cap \Omega, d(x_0,y) = \sizeofboundary} S^+(x_0;y))}{\sizeofboundary^{n-1}} \nonumber \\
&\leq& \frac{\mathcal{H}^{n-1}(S^+(x_0;y_1) \cap S^+(x_0;y_2))}{\sizeofboundary^{n-1}} 
\leq \frac{\frac{1}{3}n\omega_n\sizeofboundary^{n-1}}{\sizeofboundary^{n-1}} 
= \frac{n\omega_n}{3} \nonumber
\end{eqnarray}
\end{remark}

In fact, one can show that if the type 2 singular points form a closed set, the regular set $\Gamma^{\textnormal{reg}}_i$ is open in the free boundary set $\partial S_i \cap \Omega.$      
\begin{lemma}
\label{Partial_Result_Openness_of_Regular_Set}
In arbitrary dimension, let $H$ be given by \eqref{H1} or \eqref{H2}. Then we have that $\Gamma^{\textnormal{sin}}_{i,1}$ is closed in $\partial S_i \cap \Omega.$ Hence, if the set of type 2 singular free boundary points on $\partial S_i \cap \Omega$ is closed, we have that the regular set $\Gamma^{\textnormal{reg}}_i$ is open in the free boundary.
\end{lemma}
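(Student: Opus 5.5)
The plan is a sequential compactness argument. It uses three facts established earlier:
\begin{itemize}
\item the limit configuration keeps the supports of different populations at distance at least $\sizeofboundary$ from each other;
\item every free boundary point realizes the distance $\sizeofboundary$ to $\partial C_i\cap\Omega$ (Theorem 7.1 in \cite{CL2}, as used in the proof of Lemma \ref{higher_dimensional_free_boundary_Lipschitz_graph});
\item $\partial S_i \cap \Omega$ is closed relative to $\Omega$.
\end{itemize}

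Take a sequence $x_k \in \Gamma^{\textnormal{sin}}_{i,1}$ converging to some $x_0 \in \partial S_i\cap\Omega$. By the definition of type 1, for each $k$ there are indices $j_k\neq m_k$, both different from $i$, and points $y^1_k\in\partial S_{j_k}\cap\Omega$, $y^2_k\in\partial S_{m_k}\cap\Omega$ with $d(x_k,y^1_k)=d(x_k,y^2_k)=\sizeofboundary$. Here I read the definition as intended: both points realize the distance $\sizeofboundary$, so that $x_k$ is singular with the two witnesses in distinct populations. Since there are only finitely many populations, I pass to a subsequence along which $j_k=j$ and $m_k=m$ are constant with $j\neq m$. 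The points $y^1_k, y^2_k$ lie in the compact set $\overline{B_{\sizeofboundary+1}(x_0)}$, so a further subsequence gives $y^1_k\to y^1$ and $y^2_k\to y^2$. Passing to the limit, $d(x_0,y^1)=d(x_0,y^2)=\sizeofboundary$, and $y^1\in\overline{\partial S_j}$, $y^2\in\overline{\partial S_m}$.

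The main obstacle is to show that $y^1,y^2$ are genuine points of $\partial S_j\cap\Omega$ and $\partial S_m\cap\Omega$, and not points of $\partial\Omega$. I will follow the compactness argument used in Step 1 and Step 3 of Lemma \ref{higher_dimensional_free_boundary_Lipschitz_graph}, where limits of such tangent-ball centers are treated as points of $\partial C_i\cap\Omega$. Concretely, I argue as follows. The point $x_0\in\Omega$ realizes its distance $\sizeofboundary$ to $\partial C_i\cap \Omega$. Boundary points of $\partial\Omega$ near $\text{supp} f_j$ are handled by the separation assumption \eqref{f_i-f_j_disjointsupport}, since $u_i=0$ near $\text{supp} f_j$. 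Hence the limits stay in $\overline{S_j}\cap\Omega$. They are not interior points of $S_j$, because otherwise a point of $C_i$ would lie at distance less than $\sizeofboundary$ from $x_k$ for large $k$. So $y^1\in\partial S_j\cap\Omega$, and likewise $y^2\in\partial S_m\cap\Omega$.

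Since $S_j$ and $S_m$ are at distance at least $\sizeofboundary$, we get $y^1\neq y^2$. Thus $x_0$ has two distinct points of $\partial C_i\cap\Omega$ at distance $\sizeofboundary$, so $x_0$ is singular, and by construction it is of type 1. Therefore $\Gamma^{\textnormal{sin}}_{i,1}$ is closed in $\partial S_i\cap\Omega$.

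For the second assertion, the free boundary splits as $\partial S_i\cap\Omega=\Gamma^{\textnormal{reg}}_i\cup\Gamma^{\textnormal{sin}}_{i,1}\cup\Gamma^{\textnormal{sin}}_{i,2}$, a disjoint union. If $\Gamma^{\textnormal{sin}}_{i,2}$ is closed, then $\Gamma^{\textnormal{sin}}_i$ is a union of two relatively closed sets, hence relatively closed. Its complement $\Gamma^{\textnormal{reg}}_i$ is therefore open in the free boundary.
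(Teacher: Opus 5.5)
Your argument is correct and is essentially the paper's. The paper sets $E_m=\{p\in\partial S_i\cap\Omega: d(p,\partial S_m\cap\Omega)=R\}$, notes that each $E_m$ is relatively closed as a level set of the continuous function $d(\cdot,\partial S_m\cap\Omega)$, and writes $\Gamma^{\textnormal{sin}}_{i,1}=\bigcup_{j\neq k}(E_j\cap E_k)$. That packages your pigeonhole on index pairs and your extraction of limiting witnesses into a single step, and the second assertion is handled exactly as you do.

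One caveat concerns your justification that the limiting witnesses $y^1,y^2$ lie in $\Omega$: it is not a real argument. Separation of supports only gives $d(x_0,\textnormal{supp}\, f_j)\geq R$, which does not exclude $y^1\in\partial\Omega$ at distance exactly $R$. However, the paper's identity $\Gamma^{\textnormal{sin}}_{i,1}=\bigcup_{j\neq k}(E_j\cap E_k)$ tacitly uses the same fact, namely that $d(p,\partial S_m\cap\Omega)=R$ is attained inside $\partial S_m\cap\Omega$. So this is not a gap relative to the paper's proof.
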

\begin{proof}  
For each $m \in \{1,...,K\}-\{i\},$ we define $E_m := \{p \in \partial S_i \cap \Omega: d(p,\partial S_m \cap \Omega) = \sizeofboundary\}.$ Note that each $E_m$ is closed in $\partial S_i \cap \Omega$ as it is the inverse image of the set $\{\sizeofboundary\}$ under the distance function. Now $\Gamma^{\textnormal{sin}}_{i,1} = \bigcup_{j, k \in \{1,...,K\}-\{i\}, j \neq k} (E_j \cap E_k),$ which is closed in $\partial S_i \cap \Omega$. The second statement follows from the first and the relation $\Gamma^{\textnormal{reg}}_i = (\partial S_i \cap \Omega)\setminus(\Gamma^{\textnormal{sin}}_{i,1} \cup \Gamma^{\textnormal{sin}}_{i,2}).$
\end{proof}

\begin{remark}
In general it is unknown if the set $\Gamma^{\textnormal{sin}}_{i,2}$ of type 2 singular free boundary points is closed in the free boundary set $\partial S_i \cap \Omega.$ 
\end{remark}
Although the above question remains unknown in generality, we have $F^{\textnormal{sin}}_i$ is a $F_{\sigma}$ set.

\begin{lemma}
\label{Type_II_Singular_Set}
In arbitrary dimension, let $H$ be given by \eqref{H1} or \eqref{H2}. Then we have the following:
\begin{enumerate}
\item $\Gamma^{\textnormal{sin}}_i$ is an $F_{\sigma}$ set, and the regular set $\Gamma^{\textnormal{reg}}_i$ is $G_{\delta}$.     
\item If $\sup\{\textnormal{angle}(x_0): x_0 \in \Gamma^{\textnormal{sin}}_i\} < \frac{n\omega_n}{2}$, the regular set is open and the singular set is closed.    
\end{enumerate}
\end{lemma}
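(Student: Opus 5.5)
Both parts rest on one closedness statement. Let $p_k\in\partial S_i\cap\Omega$ with $p_k\to p\in\partial S_i\cap\Omega$, and let $y_k\in\partial C_i\cap\Omega$ with $d(p_k,y_k)=\sizeofboundary$. Then, along a subsequence, $y_k\to y\in\partial C_i\cap\Omega$ with $d(p,y)=\sizeofboundary$. This is the same compactness used in Step 1 and Step 3 of Lemma \ref{higher_dimensional_free_boundary_Lipschitz_graph}, together with the existence of realizing points from Theorem 7.1 in \cite{CL2}. It says that the set-valued map $p\mapsto T(p):=\{y\in\partial C_i\cap\Omega:\ d(p,y)=\sizeofboundary\}$ has closed graph.

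\emph{Part (1).} For $m\in\mathbb{N}$ define
\[
F_m:=\bigl\{p\in\partial S_i\cap\Omega:\ \exists\, y_1,y_2\in T(p),\ |y_1-y_2|\ge 1/m\bigr\}.
\]
Each $F_m$ is relatively closed in $\partial S_i\cap\Omega$. Indeed, take $p_k\in F_m$ with $p_k\to p$ and witnesses $y_1^k,y_2^k$. Passing to a common subsequence, the closed-graph property gives limits $y_1,y_2\in T(p)$. The inequality $|y_1-y_2|\ge 1/m$ survives the limit, so $p\in F_m$.

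By definition, $\Gamma^{\textnormal{sin}}_i=\bigcup_m F_m$, so $\Gamma^{\textnormal{sin}}_i$ is $F_\sigma$ relative to $\partial S_i\cap\Omega$. Since $\partial S_i\cap\Omega$ is itself closed in $\Omega$, the regular set is its complement there and is therefore $G_\delta$.

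\emph{Part (2).} Set $\theta:=\sup\{\textnormal{angle}(x):x\in\Gamma^{\textnormal{sin}}_i\}<\frac{n\omega_n}{2}$. It suffices to show that $\Gamma^{\textnormal{sin}}_i$ is closed. Let $p_k\in\Gamma^{\textnormal{sin}}_i$ with $p_k\to p$, and suppose for contradiction that $p$ is regular, say $T(p)=\{z\}$.

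Pick $y_1^k\ne y_2^k$ in $T(p_k)$. By the closed-graph property and uniqueness at $p$, every element of $T(p_k)$ converges to $z$. So the unit vectors $(y-p_k)/\sizeofboundary$, $y\in T(p_k)$, all lie in a shrinking neighborhood of $\nu:=(z-p)/\sizeofboundary$.

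Now compare the intersection of half-spheres at $p_k$, translated to the origin, with the hemisphere $\{x\cdot\nu\le 0\}$. If every normal $e$ satisfies $|e-\nu|<\eta$, then the intersection of the half-spheres $\{x\cdot e\le0\}$ contains $\{x\cdot\nu\le -\eta\}$. Hence
\[
\textnormal{angle}(p_k)\ge \mathcal{H}^{n-1}\bigl(\{x\in\partial B_1:\ x\cdot\nu\le-\eta_k\}\bigr)\to \frac{n\omega_n}{2}.
\]
This contradicts $\textnormal{angle}(p_k)\le\theta<\frac{n\omega_n}{2}$. Therefore $p$ is singular, $\Gamma^{\textnormal{sin}}_i$ is closed, and $\Gamma^{\textnormal{reg}}_i$ is open in the free boundary.

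The main obstacle I expect is the uniform convergence of all of $T(p_k)$ to $z$, not just of one chosen sequence. I would argue it by contradiction: if some $y^k\in T(p_k)$ stayed at distance $\ge\eta$ from $z$, extract a subsequence; its limit lies in $T(p)$ by the closed-graph property and differs from $z$, contradicting regularity of $p$. The lower bound for the spherical cap area under perturbation of the normals is then elementary.
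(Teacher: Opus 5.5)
Your proof is correct. Part (1) is the paper's argument: your $F_m$ is the paper's $S_{1/m}$, and it is shown closed by the same compactness of realizing points.

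Part (2) takes a different route. The paper notes that the angle hypothesis forces $\Gamma^{\textnormal{sin}}_i=S_\eta$ for a single $\eta>0$. The singular set is then one of the closed sets from Part (1), and no further limit argument is needed. The paper leaves the justification implicit, but it is exactly your cap estimate applied at one point. Suppose $x_0$ is singular and every $y\in T(x_0)$ lies within $\eta$ of a fixed $y_0\in T(x_0)$. Then $\textnormal{angle}(x_0)\ge \mathcal{H}^{n-1}(\{x\in\partial B_1: x\cdot e_0\le -\eta/\sizeofboundary\})$ with $e_0=(y_0-x_0)/\sizeofboundary$. This exceeds $\theta$ once $\eta$ is small, so every singular point lies in $S_\eta$.

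You instead argue sequentially: you assume a limit of singular points is regular, and use the closed-graph property a second time to collapse all of $T(p_k)$ onto $z$. Your version has the merit of writing out the geometric estimate the paper only asserts. However, the step you flagged as the main obstacle, uniform convergence of all of $T(p_k)$ to $z$, can be avoided entirely. Applied pointwise as above, the same estimate gives the paper's one-line reduction. It also yields the quantitative fact that $\eta$ depends only on $n$, $\sizeofboundary$, and the gap $\frac{n\omega_n}{2}-\theta$.
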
   
\begin{proof}   
1. For any $\eta > 0$ we consider the set $S_{\eta} := \{x_0 \in \Gamma^{\textnormal{sin}}_i: d(x_0,y_1)=d(x_0,y_2)=\sizeofboundary \textnormal{ for some } y_1, y_2 \in \partial C_i \cap \Omega, d(y_1,y_2) \geq \eta\}.$ 
We claim that $S_{\eta}$ is closed for all $\eta > 0$. To see this, assume $\{x_n\}_{n \in \mathbb{N}} \subset S_{\eta}$ and $x_n \rightarrow q$ as $n \rightarrow \infty$. For each $n$ we pick $y_n, z_n \in \partial C_i \cap \Omega$ with $d(y_n,x_n) = d(z_n,x_n) = \sizeofboundary$ and $d(y_n,z_n) \geq \eta$. 
As both $\{y_n\}_{n \in \mathbb{N}}$ and $\{z_n\}_{n \in \mathbb{N}}$ are bounded, by passing to a subsequence if necessary, $y_n \rightarrow y$ and $z_n \rightarrow z$. It follows that $d(y,z) \geq \eta$ and $d(q,y)=d(q,z)=\sizeofboundary$. Hence $q \in S_{\eta}$ and so the set $S_{\eta}$ is closed.
Now note that $\Gamma^{\textnormal{sin}}_i = \bigcup_{k \in \mathbb{N}}S_{\frac{1}{k}}$, so the set $\Gamma^{\textnormal{sin}}_i$ is $F_{\sigma}$. The second statement follows by taking complement.

2. Under this assumption, there exists $\eta > 0$ with $\Gamma^{\textnormal{sin}}_i = S_{\eta}$, which is closed by 1. Hence the regular set is open and the singular set is closed. 
\end{proof}

We are ready to prove a structure Theorem for the regular set. The proof presented here is motivated by Lemma 8.3 in \cite{CL2}.      
\begin{lemma}
\label{main theorem8}
In arbitrary dimension, let $H$ be given by \eqref{H1} or \eqref{H2}. 
Assume that $U \subset \Omega$ is an open set such that all the points on the set $\partial S_i \cap U$ are regular. Then $\partial S_i \cap U$ is locally an $(n-1)$-dimensional submanifold of $\mathbb{R}^n$ of class $C^1.$     
\end{lemma}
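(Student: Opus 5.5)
The plan is to show that near each $x_0 \in \partial S_i \cap U$ the free boundary is the graph of a $C^1$ function. The argument has three steps: a single graph representation, differentiability at every nearby point, and continuity of the derivative.

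\textbf{Single graph.} Fix $x_0 \in \partial S_i \cap U$ and let $y_0$ be its unique nearest point in $\partial C_i \cap \Omega$. Choose coordinates as in Lemma \ref{higher_dimensional_free_boundary_Lipschitz_graph}, so that $x_0=0$ and $y_0=(0,\dots,0,-R)$. That lemma gives a neighborhood $V \subset U$ of $0$ and a continuous $\psi: W \to \mathbb{R}$ with $\partial S_i \cap V = \{(x,\psi(x)) : x\in W\}$.

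\textbf{Unit normals.} For each $x \in W$, the point $p=(x,\psi(x))$ lies in $U$ and is therefore regular. Let $y(p)$ be its unique nearest point in $\partial C_i \cap \Omega$, and define $\nu(p) = (p-y(p))/R$. The map $p \mapsto y(p)$ has closed graph: if $p_k \to p$ and $y(p_k) \to y$, then $y\in \partial C_i\cap\Omega$ and $d(p,y)=R$, so $y=y(p)$ by uniqueness. All the $y(p)$ lie in a compact set, so Lemma \ref{The_Closed_Graph_Theorem_in_Point_Set_Topology} shows that $y(\cdot)$, and hence $\nu$, is continuous on $\partial S_i\cap V$. Since $\nu(0)=e_n$, after shrinking $W$ we may assume $\nu(p)\cdot e_n \ge 1/2$ throughout.

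\textbf{Differentiability at each point.} At each $p$ I would rerun Lemma \ref{higher_dimensional_free_boundary_Lipschitz_graph} in coordinates adapted to $\nu(p)$. This shows that $\partial S_i$ is, near $p$, a graph over the tangent plane $\nu(p)^\perp$ that is differentiable at $p$ with zero derivative. Because $\nu(p)$ is transversal to the horizontal hyperplane, this tangent plane is a graph over $\mathbb{R}^{n-1}$. It follows, by an implicit-function-style change of graph coordinates using $\nu(p)\cdot e_n\ge 1/2$, that $\psi$ is differentiable at $x$, with
\[
D\psi(x) = -\frac{(\nu_1,\dots,\nu_{n-1})}{\nu_n}, \qquad \nu=\nu(x,\psi(x)).
\]
More directly, one can argue by contradiction as in Step 3 of that lemma. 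If $|\psi(x+h)-\psi(x)-L\cdot h| \ge \delta|h|$ along a sequence $h_k \to 0$, then the points $(x+h_k,\psi(x+h_k))$ approach $p$ inside a cone that stays away from the plane $\nu(p)^\perp$. Their exterior tangent balls converge to a tangent ball at $p$ different from $B_R(y(p))$, which contradicts regularity. The part of this that needs real care is the side above $\nu(p)^\perp$, where the exterior ball gives no immediate control; I would rule out that side by the same subsequential-limit argument, using that the limit ball must be the unique one at $p$.

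\textbf{Continuity of $D\psi$.} The formula above expresses $D\psi(x)$ in terms of $\nu(x,\psi(x))$. Since $\nu$ and $\psi$ are continuous and $\nu_n\ge 1/2$, the map $x\mapsto D\psi(x)$ is continuous, so $\psi\in C^1(W)$. Its graph $\partial S_i \cap V$ is therefore an $(n-1)$-dimensional $C^1$ submanifold.

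\textbf{Main obstacle.} I expect the hardest step to be the uniform passage from pointwise differentiability, which is all that Lemma \ref{higher_dimensional_free_boundary_Lipschitz_graph} provides, to a derivative on a fixed neighborhood. The continuity of the normal map obtained from the closed graph theorem is exactly what resolves this.
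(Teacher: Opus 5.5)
Your proof is correct, but it reaches the $C^1$ conclusion by a different mechanism than the paper.

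The paper takes the graph $\psi$ from Lemma \ref{higher_dimensional_free_boundary_Lipschitz_graph} and invokes the semiconvexity of the free boundary (uniform-radius tangent balls from below, Corollary 6.2 in \cite{CL2}). This gives $\psi(y)\ge\psi(z)+D\psi(z)\cdot(y-z)-C|y-z|^2$ for all $y,z$. Letting $z_l\to z_0$ and using differentiability at $z_0$, every subsequential limit of $D\psi(z_l)$ is forced to equal $D\psi(z_0)$. This is the standard fact that an everywhere differentiable semiconvex function is $C^1$.

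You never use this quadratic barrier. Instead you use that regularity on all of $U$ makes the nearest-point map $p\mapsto y(p)$ single-valued with closed graph, hence continuous by Lemma \ref{The_Closed_Graph_Theorem_in_Point_Set_Topology}. You then read $D\psi=-(\nu_1,\dots,\nu_{n-1})/\nu_n$ off the continuous normal.

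What your route buys is that it establishes two ingredients the paper's proof uses without justification:
\begin{itemize}
\item Differentiability of $\psi$ at every point of its domain. Lemma \ref{higher_dimensional_free_boundary_Lipschitz_graph} gives it only at the base point, so it must be redone in adapted coordinates, as you do.
\item Uniform transversality of the normals, $\nu_n\ge 1/2$. This is what makes $D\psi(z_l)$ and the Hessians of the barriers $\phi_z$ uniformly bounded, so that the paper's limit argument makes sense.
\end{itemize}
Once those are granted, the paper's route is a shorter limit argument that needs no formula for $D\psi$.

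One simplification: with continuity of $y(\cdot)$ already in hand, the ``upper side'' of your contradiction argument is immediate. Suppose $q_k=p+t_ke_k$ with $t_k\to 0$ and $e_k\cdot\nu(p)\ge c>0$. Then $|q_k-y(q_k)|=R\le|p-y(q_k)|$ gives $e_k\cdot(p-y(q_k))\le -t_k/2$. Hence $Rc\le|y(q_k)-y(p)|\to 0$, a contradiction.
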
    
\begin{proof} 
Assume that $x_0 \in \partial S_i \cap U.$ By Lemma \ref{higher_dimensional_free_boundary_Lipschitz_graph}, there exists a differentiable function $\psi$ defined over a small open set $U \subset \mathbb{R}^{n-1}$ containing the origin, and a $r > 0,$ such that, in a system of coordinates $(x_1,x_2, ...,x_n),$ $\partial S_i \cap B_r(x_0)$ is the graph of $\psi$ over $U$. Moreover, in these coordinates, we have that $\psi(0,\cdots,0) = 0$ and $D\psi(0,\cdots,0) = (0,\cdots,0)$. By Corollary 6.2 in \cite{CL2} (the semiconvexity property of the free boundary), there is a tangent ball from below, with uniform radius, at any point on the graph of $\psi.$ 
This implies that for any $z \in U$ there exists a $C^2$ function $\phi_{z}$ tangent from below to the graph of $\psi$ at $z$ and such that $|D^2\phi_{z}| \leq C,$ for some $C > 0$ independent of $z.$ Therefore, we have that for any $z, y \in U,$
\begin{eqnarray}
\psi(y) \nonumber
    &\geq& \phi_{z}(y)\\ \nonumber 
    & \geq & \phi_{z}(z) + \Sigma_{|\alpha| = 1}D^{\alpha}\phi_{z}(z)(y-z)^{\alpha} - C|y-z|^2 \\ \nonumber
    & = & \psi(z) +\Sigma_{|\alpha| = 1}D^{\alpha}\psi(z)(y-z)^{\alpha} - C|y-z|^2,  \nonumber
\end{eqnarray}
where in the above inequality, if $\alpha = (\alpha_1,...,\alpha_{n-1}) \in \mathbb{Z}^{n-1}$ is a multi-index and $x = (x_1,...,x_{n-1}) \in \mathbb{R}^{n-1}$ we define $x^{\alpha} := x_1^{\alpha_1}x_2^{\alpha_2}...x_{n-1}^{\alpha_{n-1}}.$

We fix a point $z_0 \in U$ and consider a sequence $\{z_l\}_{l \in \mathbb{N}} \subset U$ converging to $z_0$ as $l \rightarrow \infty$. For each  multi-index $\alpha$ with $|\alpha| = 1,$ let $p_{\alpha}$ be the limit of a convergent subsequence of $\{D^{\alpha}\psi(z_l)\}_{l \in \mathbb{N}}.$ Passing to the limit in $l \rightarrow \infty$ in the inequality 
\begin{eqnarray}
    \psi(y) \geq \psi(z_l) +\Sigma_{|\alpha| = 1}D^{\alpha}\psi(z_l)(y-z_l)^{\alpha} - C|y-z_l|^2 
    \nonumber
\end{eqnarray}
We obtain that 
\begin{eqnarray}
    \psi(y) \geq \psi(z_0) +\Sigma_{|\alpha| = 1}p_{\alpha}(y-z_0)^{\alpha} - C|y-z_0|^2 
    \nonumber
\end{eqnarray}
for any $y \in U.$ Since all first order partial derivatives of $\psi$ exist at $z_0,$ it follows that $p_{\alpha} = D^{\alpha}\psi(z_0)$ for any multi-index $\alpha$ of degree $1.$ Hence, $\psi$ is of class $C^1$.
\end{proof}


{
In \cite{CL2}, the regularity of the free boundary in the two dimensional case relies on the simpler geometry of a two dimensional cone with vertex at a boundary point, which allows to consider harmonic functions (as barrier functions) vanishing on the edges of the cones. It was shown in \cite{CL2} that all singular points are isolated and the regular set is open and of class $C^1$. These techniques break down in higher dimensions. However, by extending the notion of {\it angles} to higher dimensions and by characterizing singular and regular points in terms of densities (see Theorem \ref{Regular_points_Singular_points_in_Higher_Dimensions}), we were able to show the following structure result of the free boundary in higher dimensions.
}
\begin{theorem}
\label{Combined_Results_on_Regular_Set}
In arbitrary dimension, let $H$ be given by \eqref{H1} or \eqref{H2}. Then the free boundary has the following structure
\begin{equation*}
\partial S_i \cap \Omega = \Gamma^{\textnormal{reg}}_i \cup \Gamma^{\textnormal{sin}}_i, \quad  \Gamma^{\textnormal{sin}}_i = \Gamma^{\textnormal{sin}}_{i,1} \cup \Gamma^{\textnormal{sin}}_{i,2},
\end{equation*}
where $\Gamma^{\textnormal{reg}}_i$ is $G_{\delta}$, $\Gamma^{\textnormal{sin}}_i$ is $F_{\sigma}$ and 
$\Gamma^{\textnormal{sin}}_{i,1}$ is closed. 

In addition, if $\sup\{\textnormal{angle}(x_0): x_0 \in \Gamma^{\textnormal{sin}}_i\} < \frac{n\omega_n}{2}$, the regular set $\Gamma^{\textnormal{reg}}_i$ is open and locally an $(n-1)$-dimensional submanifold of $\mathbb{R}^n$ of class $C^1$.
\end{theorem}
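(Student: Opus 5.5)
This theorem is an assembly of the lemmas already proved in this section, and the proof will simply combine them in order.

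First, the decomposition. By definition every free boundary point is either regular or singular, so $\partial S_i \cap \Omega = \Gamma^{\textnormal{reg}}_i \cup \Gamma^{\textnormal{sin}}_i$, and the union is disjoint. By Definition \ref{twotypes}, every singular point is either of type 1 or not, so $\Gamma^{\textnormal{sin}}_i = \Gamma^{\textnormal{sin}}_{i,1} \cup \Gamma^{\textnormal{sin}}_{i,2}$. The topological claims then follow directly:
\begin{itemize}
\item $\Gamma^{\textnormal{sin}}_{i,1}$ is closed in $\partial S_i \cap \Omega$ by Lemma \ref{Partial_Result_Openness_of_Regular_Set}. It is the finite union over pairs $j\neq k$ of $E_j\cap E_k$, and each of these is a preimage of $\{R\}$ under a distance function.
\item $\Gamma^{\textnormal{sin}}_i$ is $F_\sigma$ and $\Gamma^{\textnormal{reg}}_i$ is $G_\delta$ by Lemma \ref{Type_II_Singular_Set}(1). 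There $\Gamma^{\textnormal{sin}}_i=\bigcup_k S_{1/k}$, with each $S_{1/k}$ closed by compactness of the pairs of realizing points.
\end{itemize}
All of these statements are relative to the free boundary, which is itself relatively closed in $\Omega$, so the $F_\sigma$ and $G_\delta$ properties hold in either sense.

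Second, the conditional statement. Assume $\sup\{\textnormal{angle}(x_0): x_0 \in \Gamma^{\textnormal{sin}}_i\} < \frac{n\omega_n}{2}$. Lemma \ref{Type_II_Singular_Set}(2) then gives that $\Gamma^{\textnormal{sin}}_i$ is closed, so $\Gamma^{\textnormal{reg}}_i$ is relatively open in $\partial S_i\cap\Omega$.

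The step I expect to need the most care is checking the implicit claim in Lemma \ref{Type_II_Singular_Set}(2), namely that the angle bound yields a uniform $\eta>0$ with $\Gamma^{\textnormal{sin}}_i=S_\eta$. I would verify it by contradiction. Suppose there are singular points $x_m$ whose realizing points $y_1^m,y_2^m$ all satisfy $|y_1^m-y_2^m|\to 0$, and in fact every pair of realizing points at $x_m$ has diameter tending to $0$. Then the set of realizing points at $x_m$ has diameter tending to zero. Consequently all the half-spheres $S^+(x_m;y)$ are within a vanishing angle of each other, and the intersection has $\mathcal H^{n-1}$-measure tending to $\frac{n\omega_n}{2}R^{n-1}$. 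This contradicts the uniform angle bound. Here I use that a family of closed half-spheres whose normals lie in a cap of angular radius $\theta$ contains the half-sphere around the cap's center minus a band of width $\theta$.

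Finally, the $C^1$ manifold structure. Since $\Gamma^{\textnormal{reg}}_i$ is relatively open, for each $x_0\in\Gamma^{\textnormal{reg}}_i$ there is a ball $B_\rho(x_0)\subset\Omega$ with $\partial S_i\cap B_\rho(x_0)\subset \Gamma^{\textnormal{reg}}_i$. Taking $U=B_\rho(x_0)$ in Lemma \ref{main theorem8} then shows that $\partial S_i\cap U$ is locally an $(n-1)$-dimensional $C^1$ submanifold. Covering $\Gamma^{\textnormal{reg}}_i$ by such balls completes the proof.
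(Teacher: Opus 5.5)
Your proposal is correct and follows the paper's proof, which combines Lemma~\ref{Partial_Result_Openness_of_Regular_Set}, Lemma~\ref{Type_II_Singular_Set} and Lemma~\ref{main theorem8} with Definition~\ref{twotypes}. Your contradiction argument for a uniform $\eta>0$ with $\Gamma^{\textnormal{sin}}_i=S_\eta$ is a valid justification of a step that Lemma~\ref{Type_II_Singular_Set}(2) only asserts: if the realizing directions at $x_m$ lie in a cap of vanishing angular radius, then $\textnormal{angle}(x_m)\to\frac{n\omega_n}{2}$, which contradicts the assumed bound.
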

\begin{proof}
The proof follows from Lemma \ref{Partial_Result_Openness_of_Regular_Set}, Lemma \ref{Type_II_Singular_Set}, and Lemma \ref{main theorem8} using Definition \ref{twotypes}.
\end{proof}

We proceed to study the case where the supports of the populations are convex.    
As a consequence of Theorem \ref{Combined_Results_on_Regular_Set}, we have the following:          
\begin{lemma}
\label{Main_Theorem_for_Regular_Set}
In arbitrary dimension, let $H$ be given by \eqref{H1} or \eqref{H2}. Suppose that $\{u_i > 0\} \cap \Omega$ is convex for all $i = 1, ..., K.$ If the singular set is nonempty, then  $\sup\{\textnormal{angle}(x_0): x_0 \in \Gamma^{\textnormal{sin}}_i\} \leq \frac{n\omega_n}{3}$ and the regular set $\Gamma^{\textnormal{reg}}_i$ is open in the free boundary. Moreover, $\partial S_i \cap \Omega$ is locally an $(n-1)$-dimensional hyperplane. 
\end{lemma}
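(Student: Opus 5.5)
The argument splits into two parts: an angle bound for singular points, and a flatness statement for the regular set, which is then fed into Theorem \ref{Combined_Results_on_Regular_Set}.

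\textbf{Angle bound at singular points.} Let $x_0\in\Gamma^{\textnormal{sin}}_i$. Then there are two distinct points $y_1,y_2\in\partial C_i\cap\Omega$ with $d(x_0,y_1)=d(x_0,y_2)=\sizeofboundary$. I will use convexity to show that $d(y_1,y_2)\ge \sizeofboundary$; the computation of Remark \ref{remark_on_type_I_singular_points} then gives $\textnormal{angle}(x_0)\le \frac{n\omega_n}{3}$. There are two cases.
\begin{itemize}
\item If $y_1,y_2$ lie in different populations, $d(y_1,y_2)\ge\sizeofboundary$ holds automatically.
\item If $y_1,y_2\in\partial S_j$ for the same $j$, convexity of $\overline{S_j}$ puts the whole segment $\overline{y_1y_2}$ in $\overline{S_j}$. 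Every point of $\overline{S_j}$ is at distance $\ge\sizeofboundary$ from $x_0\in\overline{S_i}$, so the segment avoids $B_{\sizeofboundary}(x_0)$. But the endpoints lie on $\partial B_{\sizeofboundary}(x_0)$, and then the interior of the segment enters the open ball unless $y_1=y_2$. This is a contradiction.
\end{itemize}
So every singular point is in fact of type 1. The resulting uniform gap $\frac{n\omega_n}{3}<\frac{n\omega_n}{2}$ lets Theorem \ref{Combined_Results_on_Regular_Set} (equivalently Lemma \ref{Type_II_Singular_Set}(2)) give that $\Gamma^{\textnormal{reg}}_i$ is open and locally a $C^1$ hypersurface.

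\textbf{Flatness of the regular set.} I would show that $\partial S_i\cap\Omega$ is locally a piece of a hyperplane near each regular point $x_0$. Let $y_0$ be the unique nearest point, say $y_0\in\partial S_j$. Take a neighborhood $V$ of $x_0$ consisting of regular points, all of whose nearest points lie on $\partial S_j$; this is possible since $\Gamma^{\textnormal{sin}}_{i,1}$ is closed. For $x\in V\cap\partial S_i$ with nearest point $y(x)\in\partial S_j$, the segment $\overline{x\,y(x)}$ has length exactly $\sizeofboundary=d(\overline{S_i},\overline{S_j})$. Hence the hyperplane through $x$ orthogonal to $y(x)-x$ supports $\overline{S_i}$, since $\overline{S_i}$ and $\overline{S_j}$ are separated by a slab of width $\sizeofboundary$. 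I expect this to hold because two convex sets at distance $\sizeofboundary$ realized at $(x,y)$ are separated by the two parallel hyperplanes orthogonal to $y-x$.

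\textbf{Main obstacle.} The hard step is showing that this normal direction is constant along $V$. The approach I would try first is to treat $\overline{S_i}$ and $\overline{S_j}$ as two convex sets whose distance $\sizeofboundary$ is attained along an open piece of $\partial S_i$. The set of pairs realizing the distance between two convex sets is convex, and all such pairs share the same difference vector $y-x=v$. Here "the set of pairs realizing the distance" means the pairs realizing $d(\overline{S_i},\overline{S_j})$; a point merely being at distance $\sizeofboundary$ from $\partial C_i$ is not quite the same. To bridge this, I would use that $d(\cdot,\overline{S_j})$ is convex and equals $\sizeofboundary$ on the open set $V\cap\partial S_i$. Combined with $d\ge\sizeofboundary$ on $\overline{S_i}$, this should give the global minimum there. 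The set of minimizers of a convex function is convex, and the minimal difference vector is unique. Hence $V\cap\partial S_i$ lies in the single hyperplane $\{(p-x_0)\cdot v=0\}$, which completes the proof.
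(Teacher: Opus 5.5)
Your proof is correct. The first half matches the paper's argument: a chord $\overline{y_1y_2}\subset\overline{S_j}$ would enter $B_{\sizeofboundary}(x_0)$, so every singular point is of type 1; hence $d(y_1,y_2)\ge\sizeofboundary$, the angle is at most $\frac{n\omega_n}{3}$, and Theorem~\ref{Combined_Results_on_Regular_Set} applies. You spell out the convexity step more carefully than the paper does.

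The flatness step is where you diverge, and there is no real obstacle there. The paper never shows the normal is constant. It fixes the single pair $(x_0,y_0)$ and sets $v=y_0-x_0$. Then $\overline{S_i}\subset\{(p-x_0)\cdot v\le 0\}$ and $\overline{S_j}\subset\{(p-x_0)\cdot v\ge |v|^2\}$, and these half-spaces are exactly $\sizeofboundary$ apart. So any $z\in\overline{S_i}$ with $d(z,\overline{S_j})=\sizeofboundary$ is forced onto $\{(p-x_0)\cdot v=0\}$. This is your own separation remark, applied once at $(x_0,y_0)$.

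Your route instead uses uniqueness of the minimal-norm element of the compact convex set $\overline{S_j}-\overline{S_i}$. That shows every distance-realizing pair has displacement $v$. You then need, implicitly, that two supporting hyperplanes of $\overline{S_i}$ with the same outer normal $v$ coincide. The convexity of $d(\cdot,\overline{S_j})$ and of its minimizer set plays no role. The detour is longer, but it buys a little more: the nearest-point correspondence from $D_i$ to $D_j$ is the translation $x\mapsto x+v$. That is essentially the content of Lemma~\ref{symmetry_result_for_convex_configuration}, since the paper's reflection $T$ acts as this translation on $H_{x_0}$.

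One small correction concerns the choice of $V$. Requiring all nearest points to lie on $\partial S_j$ follows from two facts: the sets $E_m=\{p\in\partial S_i\cap\Omega: d(p,\partial S_m\cap\Omega)=\sizeofboundary\}$ are closed, and $x_0\notin E_m$ for $m\ne j$. Closedness of $\Gamma^{\textnormal{sin}}_{i,1}$ alone is not enough. It does not rule out nearby regular points whose unique nearest point lies in a third population.
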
              
\begin{proof} 
Let $x_0 \in \Gamma^{\textnormal{sin}}_i$ be a singular free boundary point, so there exists distinct $y_1, y_2 \in \partial C_i \cap \Omega$ with $d(y_1,x_0) = d(y_2,x_0) = \sizeofboundary.$ Since the set $\overline{\{u_i > 0\} \cap \Omega}$ is convex for all $i = 1, ..., K.$ This implies that $y_1 \in \partial S_j \cap \Omega$ and $y_2 \in \partial S_k \cap \Omega$ for some $j, k \in \{1,...,K\}-\{i\}, j \neq k$. 
Therefore, $x_0$ is a type 1 singular point. Hence, all the singular free boundary points on $\partial S_i \cap \Omega$ are of type 1 in the sense of Definition \ref{twotypes}. By Remark \ref{remark_on_type_I_singular_points}, we have that all singular points have angle at most $\frac{n\omega_n}{3}$. Therefore, by Theorem \ref{Combined_Results_on_Regular_Set} the regular set $\Gamma^{\textnormal{reg}}_i$ is open in the free boundary $\partial S_i \cap \Omega$ and is locally a $C^1$ manifold of dimension $n-1$.        

In order to see that the regular set is locally an $(n-1)$-dimensional hyperplane, assume $x_0 \in \Gamma^{\textnormal{reg}}_i,$ and pick a $y \in \partial S_j \cap \Omega$ ($j \neq i$) with $d(x_0,y) = \sizeofboundary.$ Hence, $\{x \in \mathbb{R}^n: (x-x_0)\cdot (y-x_0) = 0\}$ is a supporting hyperplane at $x_0$ of $S_i$. Also, $\{x \in \mathbb{R}^n: (x-y)\cdot (x_0-y) = 0\}$ is a supporting hyperplane at $y$ of $S_j$. Note that since $x_0 \in \Gamma^{\textnormal{reg}}_i$, $d(x_0,y) = \sizeofboundary$ with $y \in \partial S_j \cap \Omega$, we have $d(x_0,\partial S_m\cap\Omega) > \sizeofboundary$ for all $m \neq j, m \neq i$. Combine this with the fact the regular set is open, we can pick a small $r_0 > 0$ such that $B_{r_0}(x_0) \cap (\partial S_i \cap \Omega)$ consists of regular points only and for each $x \in B_{r_0}(x_0) \cap (\partial S_i \cap \Omega)$ we have $d(x,\partial S_j \cap \Omega) = \sizeofboundary$. 

We now assume $z \in B_{r_0}(x_0) \cap (\partial S_i \cap \Omega).$ Recall that $S_i$ is contained in the closed half space $\{x \in \mathbb{R}^n: (x-x_0)\cdot (y-x_0) \leq 0\}$ and $S_j$ is contained in the closed half space $\{x \in \mathbb{R}^n: (x-y)\cdot (x_0-y) \leq 0\}$.  Since the two closed half spaces are at distance $\sizeofboundary$ apart, the only possibility is that $z$ is in the hyperplane $\{x \in \mathbb{R}^n: (x-x_0)\cdot (y-x_0) = 0\}.$ Therefore, all points of $B_{r_0}(x_0) \cap (\partial S_i \cap \Omega)$ lie on the same hyperplane, and the regular set is locally an $(n-1)$-dimensional hyperplane. 
\end{proof}
{
Therefore, for a convex configuration, the free boundary consists of finitely many faces that are $(n-1)$-dimensional hyperplanes. We can also show the following:   
\begin{lemma}
\label{symmetry_result_for_convex_configuration}
In arbitrary dimension, let $H$ be given by \eqref{H1} or \eqref{H2}. Suppose that all $\{u_i > 0\} \cap \Omega$ are convex. Assume $i \neq j$.  
Then we have 
$\mathcal{H}^{n-1}(D_i)=\mathcal{H}^{n-1}(D_j)$, where: 

    $D_i=\{x \in \partial S_i \cap \Omega: d(x,\partial S_j \cap \Omega) = \sizeofboundary\}$ and $D_j= \{x \in \partial S_j \cap \Omega: d(x,\partial S_i \cap \Omega) = \sizeofboundary\}.$
\end{lemma}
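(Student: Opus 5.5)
The plan is to show that $D_i$ and $D_j$ are parallel translates of each other, up to $\mathcal{H}^{n-1}$-null sets. The translation vector is the common unit normal times $\sizeofboundary$. Since translations preserve $\mathcal{H}^{n-1}$, this gives the equality.

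\emph{Step 1: the facing pieces lie on two parallel hyperplanes.} Since $\overline{S_i}$ and $\overline{S_j}$ are convex, closed and at distance $\sizeofboundary$, there are points $x_0 \in \partial S_i\cap\Omega$ and $y_0\in \partial S_j \cap \Omega$ with $|x_0-y_0|=\sizeofboundary$, provided $D_i \neq \emptyset$. Set $\nu := (y_0-x_0)/\sizeofboundary$. As in the proof of Lemma \ref{Main_Theorem_for_Regular_Set}, $S_i \subset \{(x-x_0)\cdot\nu \le 0\}$ and $S_j \subset \{(x-y_0)\cdot \nu \ge 0\}$, so the slab $P=\{0<(x-x_0)\cdot\nu<\sizeofboundary\}$ separates them.

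Now take any $x \in D_i$. There is some $y\in\partial S_j\cap\Omega$ with $|x-y|=\sizeofboundary$. Both $x$ and $y$ lie in the closed half spaces above, whose distance is exactly $\sizeofboundary$. The only way two points in these half spaces can be at distance $\sizeofboundary$ is that $x$ lies on the hyperplane $\Pi_i=\{(x-x_0)\cdot\nu=0\}$ and $y=x+\sizeofboundary\nu$ lies on $\Pi_j=\{(x-y_0)\cdot\nu=0\}$. Hence $D_i \subset \Pi_i$, and the map $T(x)=x+\sizeofboundary\nu$ sends $D_i$ into $\partial S_j\cap\Omega\cap \Pi_j$. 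Moreover $T(x)\in D_j$, because $d(T(x),\partial S_i\cap\Omega)\le \sizeofboundary$ and the reverse inequality holds by separation.

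\emph{Step 2: $T$ is onto $D_j$.} By the symmetric argument, every $y \in D_j$ satisfies $y-\sizeofboundary\nu\in D_i$. Thus $T:D_i\to D_j$ is a bijective isometry, and therefore $\mathcal{H}^{n-1}(D_i)=\mathcal{H}^{n-1}(D_j)$. If $D_i=\emptyset$, the same symmetric argument gives $D_j=\emptyset$.

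\emph{Main obstacle.} The delicate point is the interaction with $\partial\Omega$. The nearest point $y$ realizing $d(x,\partial S_j\cap\Omega)=\sizeofboundary$ could a priori lie on $\partial\Omega$ rather than in $\Omega$, or the translate $T(x)$ could fall outside $\Omega$. In that case $T(x)\notin \partial S_j\cap\Omega$, and bijectivity fails on a boundary piece.

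I would first try to use that the distance is attained by a point of $\partial C_i\cap\Omega$ (Theorem 7.1 of \cite{CL2}, as used in Lemma \ref{higher_dimensional_free_boundary_Lipschitz_graph}). This shows that the minimizers are interior points. If a residual set remains where $T(x)\in\partial\Omega$, I would argue that it is contained in $\Pi_i\cap T^{-1}(\partial\Omega)$. Since $\Omega$ is Lipschitz, I would then aim to show this set is $\mathcal{H}^{n-1}$-negligible inside the hyperplane, for instance as the relative boundary of a convex subset of $\Pi_i$. That would conclude the equality of measures.
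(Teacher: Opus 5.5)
Your Steps 1--2 are the paper's proof. The paper uses the reflection across the mid-hyperplane $\{(x-z_0)\cdot(y_0-x_0)=0\}$. On $H_{x_0}=\Pi_i$ this reflection agrees with your translation $x\mapsto x+R\nu$, and since it is an involution the two inclusions give $T(D_i)=D_j$, just as your inverse translation does. The obstacle you isolate is real, and the paper does not address it. It passes from $\emptyset\neq\partial B_R(a)\cap\overline{S_j}=\{b\}$ directly to $b\in\partial S_j\cap\Omega$, which is the same unproved step as your sentence ``there is some $y\in\partial S_j\cap\Omega$ with $|x-y|=R$''. (To define $\nu$ you only need a nearest pair of the compact convex sets $\overline{S_i},\overline{S_j}$, so the first sentence of Step 1 is harmless.)

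Your two remedies fare differently.

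\emph{The geometric fallback fails.} Lipschitz regularity and convexity do not make the exceptional set $\{x\in D_i:\ x+R\nu\in\partial\Omega\}$ null. Suppose that near a point of the face, $S_j$ is the upper half ball and $\partial\Omega$ is the graph $t=-d(x',Z)$ over $\Pi_j$, where $Z\subset\Pi_j$ is closed, nowhere dense and of positive $\mathcal{H}^{n-1}$-measure. Then $\Pi_j\setminus Z\subset\partial S_j\cap\Omega$ is dense, so $Z-R\nu$ can lie in $D_i$ while $Z\cap D_j=\emptyset$. This set sits in the relative interior of the face, not on the relative boundary of a convex set. So PDE input is needed.

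\emph{Your first idea supplies that input but needs one more step.} Read \cite[Theorem 7.1]{CL2} as in the proof of Lemma \ref{higher_dimensional_free_boundary_Lipschitz_graph}: it gives some $z\in\partial C_i\cap\Omega$ with $|x-z|=R$. Your claim that ``the minimizers are interior'' overstates this, because $z$ may belong to a third population. Complete the argument as follows.
\begin{enumerate}
\item If $z\in\partial S_j$, then $(x,z)$ is a nearest pair of $\overline{S_i},\overline{S_j}$. So $z-x$ is the unique minimal-norm element of $\overline{S_j}-\overline{S_i}$, which gives $z=x+R\nu$ and $T(x)\in D_j$.
\item If $z\in\partial S_m$ with $m\neq i,j$, the same reasoning gives $z-x=R\nu_m$, where $\nu_m$ depends only on $(i,m)$. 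Moreover $x$ lies on the fixed supporting hyperplane $\Pi_i^{(m)}$ of $S_i$ facing $S_m$.
\begin{itemize}
\item $\nu_m\neq\nu$, since otherwise $z\in\overline{S_j}\cap\overline{S_m}$, contradicting $d(S_j,S_m)\geq R$.
\item $\nu_m\neq-\nu$, since otherwise the open set $S_i$ would lie in $\Pi_i$.
\end{itemize}
Hence $x$ belongs to the $(n-2)$-dimensional affine set $\Pi_i\cap\Pi_i^{(m)}$.
\end{enumerate}
There are finitely many $m$, so $\{x\in D_i:\ T(x)\notin D_j\}$ is $\mathcal{H}^{n-1}$-null. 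The same holds with $i$ and $j$ exchanged, and translation invariance of $\mathcal{H}^{n-1}$ gives $\mathcal{H}^{n-1}(D_i)=\mathcal{H}^{n-1}(D_j)$.
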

\begin{proof}
Note that if $d(S_i,S_j) > \sizeofboundary$, both $D_i$ and $D_j$ are empty, so there is nothing to prove. Therefore, we assume that $d(S_i,S_j) = \sizeofboundary$ in what follows. Since $d(S_i,S_j) = \sizeofboundary$, there exists $x_0 \in \partial S_i \cap \Omega$ and $y_0 \in \partial S_j \cap \Omega$ with $d(x_0,y_0) = \sizeofboundary$. Take $z_0 := \frac{1}{2}(x_0+y_0)$ and consider the hyperplanes $H_{x_0} := \{x \in \mathbb{R}^n: (x-x_0)\cdot(y_0-x_0) = 0\}$, $H_{y_0} := \{x \in \mathbb{R}^n: (x-y_0)\cdot(x_0-y_0) = 0\}$. Since $S_i$ and $S_j$ are convex and are at distance $\sizeofboundary$ from each other, it follows that $H_{x_0}$ is a supporting hyperplane at $x_0$ for $S_i$, and $S_i$ is contained in the closed half space $\{x \in \mathbb{R}^n: (x-x_0)\cdot(y_0-x_0) \leq 0\}$. Similarly, $H_{y_0}$ is a supporting hyperplane at $y_0$ for $S_j$, and $S_j$ is contained in the closed half space $\{x \in \mathbb{R}^n: (x-y_0)\cdot(x_0-y_0) \leq 0\}$. 

Now define $T : \mathbb{R}^n \rightarrow \mathbb{R}^n$ by 
\begin{eqnarray}
    T(x) := x + \frac{2(z_0-x)\cdot(y_0-x_0)}{|y_0-x_0|^2}(y_0-x_0)
\end{eqnarray}
for all $x \in \mathbb{R}^n$ (that is, $T$ is the reflection with respect to the hyperplane $\{x \in \mathbb{R}^n:(y_0-x_0)\cdot(x-z_0) = 0\}$). Clearly $T$ is an affine transformation that preserves the $\mathcal{H}^{n-1}$ measure.         

We claim that $T(D_i) = D_j$. To see this, assume that $b = T(a)$ for some $a \in D_i$, so $d(a,\partial S_j \cap \Omega) = \sizeofboundary$. Since $S_i$ is contained in the closed half space $\{x \in \mathbb{R}^n: (x-x_0)\cdot(y_0-x_0) \leq 0\}$, $S_j$ is contained in the closed half space $\{x \in \mathbb{R}^n: (x-y_0)\cdot(x_0-y_0) \leq 0\}$, and the two closed half spaces are at distance $\sizeofboundary$ apart from each other, the only possibility is that $a \in \partial \{x \in \mathbb{R}^n: (x-x_0)\cdot(y_0-x_0) \leq 0\} = H_{x_0}$. Since $a$ realizes distance $\sizeofboundary$ from $S_j$ and $\emptyset \neq \partial B_{\sizeofboundary}(a) \cap \overline{S_j} \subset \partial B_{\sizeofboundary}(a) \cap \{x \in \mathbb{R}^n: (x-y_0)\cdot(x_0-y_0) \leq 0\} = \{b\}$, we must have that $b \in \partial S_j \cap \Omega$. Hence $b \in D_j$. We have shown that $T(D_i) \subset D_j$. By switching $i$ and $j$ in the above argument we also have $T(D_j) \subset D_i$. We obtain $D_i = T(T(D_i)) \subset T(D_j) \subset D_i$, and so $D_i = T(D_j)$ and $T(D_i) = D_j$. Since $T(D_i) = D_j$ and $T$ is an affine motion, we have $\mathcal{H}^{n-1}(D_i) = \mathcal{H}^{n-1}(D_j)$. 
\end{proof}
}
We give a few remarks regarding Lemma \ref{Main_Theorem_for_Regular_Set} and Lemma \ref{symmetry_result_for_convex_configuration}. 
\begin{remark}
\label{Convex_Polytope}
By Lemma \ref{Main_Theorem_for_Regular_Set} and Lemma \ref{symmetry_result_for_convex_configuration}, it follows that the singular set has Hausdorff dimension at most $n-2$ if all $S_i$ are  convex. 
\end{remark}

\begin{example}
\label{Four_Populations_2D}
Consider the initial configuration of system \eqref{main_problems} as follows: $\Omega := \{(x,y) \in \mathbb{R}^2: x^2+y^2 < 1\}$, $Q_1 := \{(x,y) \in \mathbb{R}^2: x > 0, y > 0\}$, $Q_2 := \{(x,y) \in \mathbb{R}^2: x < 0, y > 0\}$, $Q_3 := \{(x,y) \in \mathbb{R}^2: x < 0, y < 0\}$, $Q_4 := \{(x,y) \in \mathbb{R}^2: x > 0, y < 0\}$. For each $i = 1, \cdots4$, the boundary data $f_i : (\partial \Omega)_{\leq \sizeofboundary} \rightarrow \mathbb{R}$ is defined by requiring that $f_i$ is supported in $(\partial \Omega)_{\leq \sizeofboundary} \cap Q_i$. A possible configuration, after letting $\eps \to 0$, is given by  Figure \ref{figure_for_four_populations_2D}. In fact, assuming uniqueness of the free boundary it follows that this is the unique configuration for the four populations: $S_i = \{(x,y) \in \Omega \cap Q_i: d((x,y),\partial Q_i) > \frac{\sizeofboundary}{2}\}$, and the free boundary $\partial S_i \cap \Omega$ consists of two straight lines meeting at a singular point  with angle $\frac{\pi}{2}$. The uniqueness of the limit configuration is an open problem.


\end{example}

{
\begin{figure}
{
\usetikzlibrary{calc}

\begin{tikzpicture}[scale=3]
    \draw[thick] (0,0) circle (1cm);
    \node at (0.9, 0.9) {$\Omega$};

    \def\hR{0.2} 

    \begin{scope}
        \clip (0,0) circle (1cm);
        
        \filldraw[fill=blue!10, draw=blue!80, thick] (\hR, \hR) rectangle (1.2, 1.2);
        
        \filldraw[fill=green!15, draw=green!80, thick] (-\hR, \hR) rectangle (-1.2, 1.2);
        
        \filldraw[fill=orange!15, draw=orange!80, thick] (-\hR, -\hR) rectangle (-1.2, -1.2);
        
        \filldraw[fill=red!10, draw=red!80, thick] (\hR, -\hR) rectangle (1.2, -1.2);
    \end{scope}

    \fill (\hR, \hR) circle (0.5pt); 
    \fill (-\hR, \hR) circle (0.5pt);
    \fill (-\hR, -\hR) circle (0.5pt);
    \fill (\hR, -\hR) circle (0.5pt);

    \node[blue!80, font=\footnotesize] at (0.55, 0.55) {$S_1$};
    \node[green!80, font=\footnotesize] at (-0.55, 0.55) {$S_2$};
    \node[orange!80, font=\footnotesize] at (-0.55, -0.55) {$S_3$};
    \node[red!80, font=\footnotesize] at (0.55, -0.55) {$S_4$};

    \draw[<->, dashed, gray] (-\hR, 0.4) -- (\hR, 0.4) node[midway, above, font=\tiny, text=black] {$R$};
    \draw[<->, dashed, gray] (0.4, -\hR) -- (0.4, \hR) node[midway, right, font=\tiny, text=black] {$R$};
\end{tikzpicture}

}
\caption{This figure illustrates Example \ref{Four_Populations_2D}. Each population has a singular point with angle $\frac{\pi}{2}$.}
\label{figure_for_four_populations_2D}
\end{figure}
}

 Also, note that in general for a nonempty compact convex set in $\mathbb{R}^n$, $n \geq 2$, the set of all differentiable boundary points (i.e. the boundary points with a unique supporting hyperplane) needs not be open in the topological boundary, as shown in the following example.  
\begin{example}
\label{patological_convex_set}
We give an example of a compact convex set for which the set of differentiable boundary points is not open.
Take $B := \{(x,y) \in \mathbb{R}^2: x^2+y^2 \leq 1\}$ to be the unit compact disk in the plane. For each $k \in \mathbb{N},$ we define $p_k := (\cos \frac{1}{k}, \sin \frac{1}{k}) \in \partial B$. For any $k \in \mathbb{N},$ we consider the closed half space $H_k := \{(x,y) \in \mathbb{R}^2: y \leq \frac{\sin \frac{1}{k+1}-\sin \frac{1}{k}}{\cos \frac{1}{k+1}-\cos \frac{1}{k}}(x-\cos \frac{1}{k})+\sin \frac{1}{k}\},$ and take $A := \bigcap _{k \in \mathbb{N}} (B \cap H_k)$. It follows that $A$ is a nonempty compact convex set in the plane. Also, each $p_k \in \partial A$ is a point of non-differentiability. However, $p_k \rightarrow (1,0)$ as $k \rightarrow \infty,$ and $(1,0) \in \partial A$ is a point of differentiability. 
This shows that the set of all differentiable boundary points is not open in the boundary. 
On the other hand, our results show that these types of sets can not be the supports of the populations $u_i$.
\end{example}
In the two dimensional case, it was shown in (\cite{CL2} Theorem 8.10) that if $x_0 \in \partial S_i \cap \Omega$ and $y_0 \in \partial S_j \cap \Omega$ are free boundary points ($j \neq i$) with $d(x_0,y_0) = \sizeofboundary$, then $\textnormal{angle}(x_0) = \textnormal{angle}(y_0)$. For general dimensions, in Corollary \ref{Weak_Form_Equality_of_Angles}, we were able to show a partial version of this result for convex supports.

\begin{corollary}
\label{Weak_Form_Equality_of_Angles}
In arbitrary dimension, let $H$ be given by \eqref{H1} or \eqref{H2}. If we assume moreover that $\{u_i > 0\} \cap \Omega$ is convex for all $i = 1, \cdots,K$, then for each free boundary point $x_0 \in \partial S_i \cap \Omega$ the density $\lim_{r \rightarrow 0^+}\frac{|B_r(x_0)\cap S_i|}{|B_r(x_0)|}$ exists. In addition, if $x_0 \in \partial S_i \cap \Omega$ and $y_0 \in \partial C_i \cap \Omega$ are free boundary points with $d(x_0,y_0) = \sizeofboundary$, we have that $x_0$ is regular if and only if $y_0$ is regular.
\end{corollary}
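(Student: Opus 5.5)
For the existence of the density, I will use the convexity of $S_i$ directly. Translate so that $x_0=0$ and let $K := \overline{\bigcup_{t>0} t S_i}$ be the tangent cone of the convex set $\overline{S_i}$ at $x_0$. It is a closed convex cone. Since $S_i$ is open and convex, $S_i$ has nonempty interior. The dilations $S_i/r$ increase as $r\downarrow 0$ and converge in $L^1_{loc}$ to $K$: they are monotone increasing, their union differs from the interior of $K$ by a null set, and the boundary of a convex cone has measure zero. Hence
\begin{equation*}
\frac{|B_r(x_0)\cap S_i|}{|B_r(x_0)|}=\frac{|B_1\cap (S_i/r)|}{|B_1|}\longrightarrow \frac{|B_1\cap K|}{|B_1|},
\end{equation*}
by monotone convergence. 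Since $x_0\in\Omega$, $B_r(x_0)\subset\Omega$ for small $r$, so the presence of $\Omega$ does not interfere. This gives existence of the limit at every free boundary point, not only $\mathcal{H}^{n-1}$-a.e.

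For the second statement, I will use the reflection argument of Lemma \ref{symmetry_result_for_convex_configuration}. Let $y_0\in\partial S_j\cap\Omega$ with $j\neq i$. Set $H_{x_0},H_{y_0}$ as there, so $S_i\subset\{(x-x_0)\cdot(y_0-x_0)\le 0\}$ and $S_j\subset\{(x-y_0)\cdot(x_0-y_0)\le0\}$. Suppose $x_0$ is singular. By Lemma \ref{Main_Theorem_for_Regular_Set} (its proof), every singular point of a convex configuration is of type 1. So there is $y_1\in\partial S_k\cap\Omega$ with $k\ne j$ and $d(x_0,y_1)=\sizeofboundary$. I then show that $y_0$ realizes distance $\sizeofboundary$ to two distinct points of $\partial C_j\cap\Omega$. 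One is $x_0$. For the other, I will exploit that near $x_0$ the set $S_i$ is strictly inside $H_{x_0}$ along the direction determined by $y_1$. Equivalently, the supporting hyperplane of $S_i$ at $x_0$ is not unique. Then I argue that $D_i=\{x\in\partial S_i: d(x,S_j)=\sizeofboundary\}$ is, near $x_0$, contained in a proper face of $S_i$, so $x_0$ lies on the relative boundary of the flat piece $D_i\subset H_{x_0}$.

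By the reflection $T$ with $T(D_i)=D_j$, the point $y_0=T(x_0)$ lies on the relative boundary of $D_j$ in $H_{y_0}$. If $y_0$ were regular, Lemma \ref{Main_Theorem_for_Regular_Set} would give a neighborhood of $y_0$ in which $\partial S_j\cap\Omega$ is a piece of $H_{y_0}$ consisting of regular points, all at distance $\sizeofboundary$ from $S_i$ only. That neighborhood lies in $D_j$, contradicting that $y_0$ is on the relative boundary of $D_j$. By symmetry the converse direction follows the same way.

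The main obstacle is the claim that a singular $x_0$ lies on the relative boundary of $D_i$ within $H_{x_0}$. Equivalently, if $x_0$ is regular, a neighborhood of $x_0$ in $\partial S_i$ is contained in $D_i$; this part is Lemma \ref{Main_Theorem_for_Regular_Set}. If $x_0$ is singular, then arbitrarily close to $x_0$ there must be points of $\partial S_i\cap H_{x_0}$ not in $D_i$, or points of $\partial S_i$ off $H_{x_0}$. I would try to prove this via the point $y_1$: the half-space $\{(x-x_0)\cdot(y_1-x_0)\le0\}$ also contains $S_i$. Hence $\partial S_i$ near $x_0$ lies in the union of the two distinct hyperplanes $H_{x_0}$ and $\{(x-x_0)\cdot(y_1-x_0)=0\}$ intersected with the wedge. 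Since $S_i$ has interior and density positive only in this wedge, points of $\partial S_i$ on the second hyperplane accumulate at $x_0$. Those points are not in $H_{x_0}$ unless they lie in the $(n-2)$-dimensional intersection, and so they are not in $D_i$. I expect some care to be needed for degenerate cases, such as when $S_i$ near $x_0$ is thin (a cusp-like wedge).
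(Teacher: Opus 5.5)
Your proof that the density exists is correct, and it takes a different route from the paper. The paper reads the limit off from the claim that $S_i$ is locally a convex polytope (Lemma \ref{Main_Theorem_for_Regular_Set} together with Remark \ref{Convex_Polytope}). Your argument is self-contained. With $x_0=0\in\overline{S_i}$ one has $sS_i\subset S_i$ for $s\in(0,1]$, so $S_i/r$ increases to the open convex cone $\bigcup_{t>0}tS_i$, which differs from its closure $K$ only by the null set $\partial K$. This uses only openness and convexity of $S_i$, and needs no structure theory of the free boundary.

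In the second part your scheme is valid and is essentially the paper's argument run in contrapositive. The paper translates the flat neighbourhood of a regular $x_0$ in $H_{x_0}$ by $y_0-x_0$, obtaining a flat neighbourhood of $y_0$ in $\partial S_j$. This gives density $\frac12$ at $y_0$, hence regularity by Theorem \ref{Regular_points_Singular_points_in_Higher_Dimensions}. Your reflection $T$ acts on $H_{x_0}$ exactly as this translation.

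The gap is the step you flag yourself: your sketch rests on claims that do not follow from $S_i\subset\{(x-x_0)\cdot(y_0-x_0)\le0\}\cap\{(x-x_0)\cdot(y_1-x_0)\le0\}$.
\begin{itemize}
\item $\partial S_i$ near $x_0$ need not lie in the union of the two bounding hyperplanes. A square-pyramid vertex, with $y_0,y_1$ facing two of its four faces, satisfies both constraints.
\item Points of $\partial S_i$ on the second hyperplane need not accumulate at $x_0$. The same pyramid with $y_1-x_0$ pointing opposite to its axis satisfies the constraints, and then the second hyperplane meets $\overline{S_i}$ only at $x_0$.
\end{itemize}

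You do not need points of $\partial S_i$ at all, only points of $H_{x_0}$ near $x_0$ outside $D_i$. Since $y_1\neq y_0$, and $y_1-x_0\neq-(y_0-x_0)$ (otherwise the nonempty open set $S_i$ would lie in $H_{x_0}$), the component $w$ of $y_1-x_0$ orthogonal to $y_0-x_0$ is nonzero. For every $t>0$ the point $x_0+tw$ lies in $H_{x_0}$ and satisfies $(tw)\cdot(y_1-x_0)=t|w|^2>0$. Hence it lies outside $\overline{S_i}\supset D_i$.

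Equivalently, in the paper's terms: if a relative disc of $H_{x_0}$ about $x_0$ lay in $\partial S_i$, the cone over it from a point of $S_i$ would put a half-ball into $S_i$. That gives density $\frac12$ and hence regularity of $x_0$. No degenerate cases arise either way.

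One small point on the other side of the argument. Lemma \ref{Main_Theorem_for_Regular_Set} gives $\partial S_j\cap B_r(y_0)\subset H_{y_0}$. To place $y_0$ in the relative interior of $D_j$ you need the reverse inclusion $H_{y_0}\cap B_r(y_0)\subset\partial S_j$. It holds because each open half-ball of $B_r(y_0)\setminus H_{y_0}$ is connected and misses $\partial S_j$.
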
 
\begin{proof}
By Lemma \ref{Main_Theorem_for_Regular_Set} and Remark \ref{Convex_Polytope}, the regular set is locally a hyperplane and $\{u_i > 0\} \cap \Omega$ is of the form $E_i \cap \Omega$, where $E_i$ is a convex polytope (a finite intersection of open half spaces). Hence, for each free boundary point $x_0 \in \partial S_i \cap \Omega$ the density $\lim_{r \rightarrow 0^+}\frac{|B_r(x_0)\cap S_i|}{|B_r(x_0)|}$ exists. Now assume that $x_0 \in \partial S_i \cap \Omega$ and $y_0 \in \partial C_i \cap \Omega$ are free boundary points with $d(x_0,y_0) = \sizeofboundary$. We may assume $y_0 \in \partial S_j \cap \Omega$ for some $j \neq i$. If $x_0$ is regular, $\partial S_i \cap \Omega$ is locally an $(n-1)$-dimensional hyperplane by Lemma \ref{Main_Theorem_for_Regular_Set}. Since distance $\sizeofboundary$ is always realized from $\partial S_i \cap \Omega$ by $\partial S_j \cap \Omega$, $\partial S_j \cap \Omega$ is also locally a hyperplane near $y_0$. Therefore, at $y_0$ the density $\lim_{r \rightarrow 0^+}\frac{|B_r(y_0)\cap S_j|}{|B_r(y_0)|} = \frac{1}{2}$. By Theorem \ref{Regular_points_Singular_points_in_Higher_Dimensions}, $y_0$ is regular.  
\end{proof}



\begin{theorem}
\label{Convex}
In arbitrary dimension, let $H$ be given by \eqref{H1}  or \eqref{H2}.
Assume that $S_i$ is convex for all $i = 1, ..., K$. Then $\sup\{\textnormal{angle}(x_0): x_0 \in \Gamma^{\textnormal{sin}}_i\} \leq \frac{n\omega_n}{3}$  and the regular set is open and locally an $(n-1)$ dimensional hyperplane. In addition, we have the following:

\noindent {\bf 1.} For any $i \neq j$, $\mathcal{H}^{n-1}(D_i)=\mathcal{H}^{n-1}(D_j)$, where \\
    $D_i=\{x \in \partial S_i \cap \Omega: d(x,\partial S_j \cap \Omega) = \sizeofboundary\}$ and $D_j= \{x \in \partial S_j \cap \Omega: d(x,\partial S_i \cap \Omega) = \sizeofboundary\}.$ \\
 \noindent{\bf 2.} Moreover, if $x_0 \in \partial S_i \cap \Omega$ and $y_0 \in \partial C_i \cap \Omega$ are free boundary points with $d(x_0,y_0) = \sizeofboundary$, then $x_0$ is regular if and only if $y_0$ is regular.
\end{theorem}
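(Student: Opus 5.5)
The theorem collects results already established for convex configurations, so the plan is to assemble it from Lemma \ref{Main_Theorem_for_Regular_Set}, Lemma \ref{symmetry_result_for_convex_configuration} and Corollary \ref{Weak_Form_Equality_of_Angles}.

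\emph{First assertion.} Suppose $x_0\in\Gamma^{\textnormal{sin}}_i$. Then there are distinct $y_1,y_2\in\partial C_i\cap\Omega$ with $d(x_0,y_1)=d(x_0,y_2)=R$. I will argue that $y_1,y_2$ cannot lie in the same $\partial S_j$. If they did, convexity of $\overline{S_j}$ would place the whole segment $\overline{y_1y_2}$ in $\overline{S_j}$. Its midpoint is at distance strictly less than $R$ from $x_0$, by strict convexity of the Euclidean ball. Since $\overline{S_i}$ and $\overline{S_j}$ are at distance at least $R$, this is a contradiction.

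Hence every singular point is of type 1 (Definition \ref{twotypes}). Remark \ref{remark_on_type_I_singular_points} then gives $\textnormal{angle}(x_0)\le n\omega_n/3$. This yields $\sup\{\textnormal{angle}(x_0): x_0\in\Gamma^{\textnormal{sin}}_i\}\le n\omega_n/3<n\omega_n/2$. The bound holds vacuously if the singular set is empty, in which case the regular set is the whole free boundary and is trivially open.

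Theorem \ref{Combined_Results_on_Regular_Set} then shows that $\Gamma^{\textnormal{reg}}_i$ is open and a $C^1$ hypersurface. To upgrade this to flatness, take a regular $x_0$ with its unique partner $y\in\partial S_j\cap\Omega$. The two supporting half-spaces at $x_0$ and $y$ are at distance exactly $R$. Every nearby regular point $z$ still realizes distance $R$ to $S_j$, because the distances to the other populations are strictly larger than $R$ near $x_0$ and the regular set is open. The half-space separation then forces $z$ onto the hyperplane through $x_0$ orthogonal to $y-x_0$. This is exactly the argument of Lemma \ref{Main_Theorem_for_Regular_Set}.

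\emph{Item 1.} If $d(S_i,S_j)>R$, then both $D_i$ and $D_j$ are empty. Otherwise, choose $x_0,y_0$ realizing the distance $R$, and reflect across the mid-hyperplane using the map $T$ defined in Lemma \ref{symmetry_result_for_convex_configuration}. Convexity confines $D_i$ to $H_{x_0}$, and each $a\in D_i$ has as its unique closest point of $\overline{S_j}$ the point $T(a)$. This gives $T(D_i)\subset D_j$, and by symmetry $T(D_j)\subset D_i$. Since $T$ is an involution, $T(D_i)=D_j$, and $T$ is an isometry, so the $\mathcal H^{n-1}$ measures agree.

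\emph{Item 2.} This is Corollary \ref{Weak_Form_Equality_of_Angles}. If $x_0$ is regular, then $\partial S_i$ is flat near $x_0$. Because points near $y_0$ on $\partial S_j$ realize distance $R$ to the flat piece near $x_0$, $\partial S_j$ is flat near $y_0$, so the density of $S_j$ at $y_0$ is $1/2$. Theorem \ref{Regular_points_Singular_points_in_Higher_Dimensions} then makes $y_0$ regular. The converse follows by exchanging the roles of $x_0$ and $y_0$.

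\emph{Main obstacle.} The delicate step is transferring flatness from the neighborhood of $x_0$ to the neighborhood of $y_0$. One must check that the set of points of $\partial S_j$ at distance $R$ from the flat piece of $\partial S_i$ fills a full neighborhood of $y_0$ in $\partial S_j$. I would do this with the half-space argument: $\partial S_j$ is trapped in the half-space bounded by $H_{y_0}$, and it contains the translates $T(z)$ of the nearby flat points $z$. Convexity of $S_j$ then forces $\partial S_j$ to coincide with $H_{y_0}$ near $y_0$.
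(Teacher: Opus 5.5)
Your proposal is correct and follows the paper's route: the paper's proof of the theorem simply combines Lemma \ref{Main_Theorem_for_Regular_Set}, Lemma \ref{symmetry_result_for_convex_configuration} and Corollary \ref{Weak_Form_Equality_of_Angles}, and your sketches of each ingredient match the paper's arguments (singular points are of type 1 by convexity, the half-space squeeze gives flatness, the reflection $T$ gives item 1, and density $\frac12$ at $y_0$ gives item 2). Your transfer of flatness to $y_0$ uses the translates $T(z)=z+(y_0-x_0)$ of nearby flat points, together with the half-space containment and convexity of $S_j$; this is a more careful version of the paper's one-line justification in Corollary \ref{Weak_Form_Equality_of_Angles}.
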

\begin{proof}
The proof follows by combining Lemma \ref{Main_Theorem_for_Regular_Set}, Lemma \ref{symmetry_result_for_convex_configuration}, and Corollary \ref{Weak_Form_Equality_of_Angles}. 
\end{proof}

\section*{Acknowledgment}
We would like thank  Stefania Patrizi for discussions concerning the regularity of the free boundary in two dimensions established in \cite{CL2}.

\end{document}